\crefname{cond}{Condition}{Conditions}
\DeclareMathOperator{\qn}{qn}
\DeclareMathOperator{\uqn}{qn_u}
\DeclareMathOperator{\lqn}{qn_\ell}
\DeclareMathOperator{\pn}{pn}
\DeclareMathOperator{\upn}{pn_u}
\DeclareMathOperator{\lpn}{pn_\ell}
\DeclareMathOperator{\len}{len}
\DeclareMathOperator{\fwdP}{fwd_P}
\DeclareMathOperator{\mad}{mad}
\newcommand{\calQ}{\ensuremath{\mathcal{Q}}}
\newcommand{\calP}{\ensuremath{\mathcal{P}}}
\newcommand{\calA}{\ensuremath{\mathcal{A}}}
\newcommand{\calB}{\ensuremath{\mathcal{B}}}
\newcommand{\calC}{\ensuremath{\mathcal{C}}}
\newcommand{\calF}{\ensuremath{\mathcal{F}}}
\renewcommand{\leq}{\leqslant}
\renewcommand{\geq}{\geqslant}
\begin{document}
\title{Linear Layouts of Complete Graphs} 
\author{%
    Stefan Felsner\inst{1}\thanks{Partially supported by DFG grant FE 340/13-1} \and
    Laura Merker\inst{2} \and
    Torsten Ueckerdt\inst{2} \and
    Pavel Valtr\inst{3}
}
\authorrunning{S. Felsner et al.}
\institute{%
    Institute of Mathematics, Technische Universität Berlin, Germany
    \email{felsner@math.tu-berlin.de}
    \and
    Institute of Theoretical Informatics, Karlsruhe Institute of Technology, Germany 
    \email{laura.merker2@kit.edu, torsten.ueckerdt@kit.edu}
    \and
    Department of Applied Mathematics, Faculty of Mathematics and Physics, Charles University, Prague, Czech Republic
    \email{valtr@kam.mff.cuni.cz}
}
\maketitle
\begin{abstract}
    A \emph{page} (\emph{queue}) with respect to a vertex ordering of a graph is a set of edges such that no two edges cross (nest), i.e., have their endpoints ordered in an \textsc{abab}-pattern (\textsc{abba}-pattern).
    A \emph{union page} (\emph{union queue}) is a vertex-disjoint union of pages (queues).
    The \emph{union page number} (\emph{union queue number}) of a graph is the smallest $ k $ such that there is a vertex ordering and a partition of the edges into $ k $ union pages (union queues).
    The \emph{local page number} (\emph{local queue number}) is the smallest $ k $ for which there is a vertex ordering and a partition of the edges into pages (queues) such that each vertex has incident edges in at most $ k $ pages (queues).
    
    We present upper and lower bounds on these four parameters for the complete graph $K_n$ on $n$ vertices. In three cases we obtain the exact result up to an additive constant. In particular, the local page number of $K_n$ is $n/3 \pm \mathcal{O}(1) $, while its local and union queue number is $(1-1/\sqrt{2})n \pm \mathcal{O}(1) $. The union page number of $ K_n $ is between $ n/3 - \mathcal{O}(1) $ and $ 4n/9 + \mathcal{O}(1) $.

    \keywords{page number, stack number, queue number, local covering numbers, union covering numbers, complete graphs} 
\end{abstract}

\section{Introduction}
\label{sec:introduction}

A \emph{linear layout} of a graph consists of a vertex ordering together with a partition of the edges.
For a fixed vertex ordering $ \prec $, we say that two independent edges $ vw $ and $ xy $ with $ v \prec w $ and $ x \prec y $ 
\begin{itemize}
 \item \emph{nest} if $ v \prec x \prec y \prec w $ or $ x \prec v \prec w \prec y $ and
 \item \emph{cross} if $ v \prec x \prec w \prec y $ or $ x \prec v \prec y \prec w $.
\end{itemize}
A \emph{queue}, respectively a \emph{page} (also called \emph{stack}), is a subset of edges that are pairwise non-nesting, respectively non-crossing.
A \emph{queue layout}, respectively a \emph{book embedding}, is a linear layout whose partition of the edge set consists of queues, respectively pages.
The \emph{queue number} and \emph{page number} (also called \emph{stack number} or \emph{book thickness}) denote the smallest $ k $ such that there is a linear layout consisting of at most $ k $ queues, respectively at most $k$ pages.
Both queue layouts and book embeddings were intensively investigated in the past decades, where complete graphs are one of the very first considered graph classes~\cite{bk-79,hr-92}.

Queue layouts and book embeddings model how the edges of a graph can be assigned to and processed by queues, respectively stacks.
It was first asked by Heath, Leighton, and Rosenberg~\cite{hlr-92} whether queues or stacks are more powerful in this context.
Recently, Dujmović et al.~\cite{dehmw-20} partly answered this question by presenting a class of graphs with bounded queue number that needs an unbounded number of stacks, showing that stacks are not more powerful than queues for representing graphs.
In the classical variant in this question, the total number of necessary queues or stacks serves as a measure for the power of queues and stacks.
Local and union variants relax the setting by allowing more queues or stacks, as long as each vertex is touched by a small number of queues, respectively stacks, or if they operate on vertex-disjoint subgraphs.

Local and union variants of queue layouts and book embeddings were recently introduced by the second and third author~\cite{mu-19,mu-20}.
A linear layout is called \emph{$ k $-local} if each vertex has incident edges in at most $ k $ parts of the edge partition.
The \emph{local queue number}, respectively \emph{local page number}, is the smallest $ k $ such that there is a $ k $-local queue layout, respectively a $ k $-local book embedding for $ G $.
Given a fixed vertex ordering, a \emph{union queue} (\emph{union page}) is a vertex-disjoint union of queues (pages).
The \emph{union queue number}, respectively \emph{union page number}, then is the smallest $ k $ such that there is a linear layout whose edge partition consists of at most $ k $ union queues, respectively union pages.

Local and union variants have been considered for numerous graph decomposition parameter, such as boxicity, interval numbers, planar thickness, poset dimension, several arboricities, and many more.
Especially in recent years, there has been a lot of interest in these variants in various directions, see e.g.~\cite{bsu-18,el-20,mm-21,kmmssuw-20,dfgklnu-21,ku-16}.

In this paper, we continue the investigation of local and union variants of linear layouts, that is, queue numbers and page numbers.
We establish bounds on the respective graph parameters for complete graphs, which interestingly turns out to be non-trivial; in contrast to their classic counterparts.
Our results show that the local and union variants of queue numbers and page numbers for $n$-vertex complete graphs are located strictly between the trivial lower bound of $ (n - 1)/4 $ due to the density~\cite[see also \cref{sec:prel}]{mu-19,mu-20} and $ \lfloor n/2 \rfloor $, respectively $ \lceil n/2 \rceil $, which is the queue number, respectively page number, of complete graphs~\cite{bk-79,hr-92}.

\subsubsection*{Outline.}
In \cref{sec:prel}, we survey the relation between local and union variants of queue layouts and book embeddings.
We give a lower bound on the local queue number of complete graphs and show how to obtain the same upper bound for the union queue number in \cref{sec:queue-number} (up to an additive constant).
\cref{sec:page-number} continues with a lower and a matching (up to an additive constant) upper bound on the local page number.
We also present an upper bound on the union page number and discuss properties that an improved bound needs to satisfy in this section.

\subsubsection*{Our results.} 
Both the local queue number and the union queue number of $K_n$ are linear in $n$ with the leading
coefficient being $1-1/\sqrt{2} \approx 0.29289$. For the local queue number, the error term $ \mathcal{O}(1) $ is small; it is between $-0.21$ and $+2$.
\begin{theorem}\label{thm:queue-number} 
 The local queue number and the union queue number of $K_n$ satisfy
 \begin{equation*}
  \lqn(K_n) = (1-\frac{1}{\sqrt{2}})n \pm \mathcal{O}(1) \text{ and } 
  \uqn(K_n) = (1-\frac{1}{\sqrt{2}})n \pm \mathcal{O}(1).
 \end{equation*}
\end{theorem}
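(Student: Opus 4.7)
The plan is to prove the lower and upper bounds separately, exploiting the fact, established in \cref{sec:prel}, that every $k$-union queue layout is in particular $k$-local (each vertex meets at most one queue inside each union queue), and hence $\lqn(K_n) \leq \uqn(K_n)$. It therefore suffices to prove the lower bound $\lqn(K_n) \geq (1-\tfrac{1}{\sqrt{2}})\,n - \mathcal{O}(1)$ and the upper bound $\uqn(K_n) \leq (1-\tfrac{1}{\sqrt{2}})\,n + \mathcal{O}(1)$; the missing direction for each parameter follows for free.

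For the lower bound I would fix an arbitrary $k$-local queue layout with ordering $v_1 \prec \cdots \prec v_n$ and analyse a central vertex $v = v_i$ with $i \approx n/2$. The crucial structural observation is that within any single queue the edges whose intervals contain a common point must be \emph{parallel}---their left and right endpoints appear in the same order along $\prec$---so each queue contributes at most $\min(i-1, n-i)$ of the $(i-1)(n-i)$ edges crossing $v$. These crossing edges split into those whose queue is incident to $v$ (at most $k$ such queues, by $k$-locality) and those whose queue is not, and the latter class is further controlled by invoking the same local condition at other carefully chosen vertices to the left and right of $v$. Balancing the resulting contributions and optimising over $i$ produces a quadratic inequality in $k$ and $n$ whose positive root is exactly $(1-\tfrac{1}{\sqrt{2}})\,n$.

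For the upper bound I would give an explicit construction. The vertex set is ordered and partitioned into a large block $A$ and a complementary block $B$ with $|A| \approx (2-\sqrt{2})\,n$ (so that $\lfloor |A|/2 \rfloor = (1-\tfrac{1}{\sqrt{2}})\,n + \mathcal{O}(1)$). The edges inside $K[A]$ are covered by the classical queue layout of~\cite{hr-92} using $\lfloor |A|/2 \rfloor$ queues, while the edges inside $B$ and between $A$ and $B$ are covered by auxiliary queues designed so that at each level their vertex sets are disjoint from those of the $A$-queues; a matching $A$-queue and auxiliary queue then combine into one union queue. The specific choice $|A| \approx (2-\sqrt{2})\,n$ is precisely what balances the ``inside-$A$'' count against the residual capacity, producing the factor $1-\tfrac{1}{\sqrt{2}}$.

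The main obstacle is the lower bound: queues not incident to $v$ may still absorb many crossing edges at $v$, so a single-vertex analysis is insufficient, and I will have to apply the $k$-local constraint simultaneously at several positions and aggregate the local bounds into one global inequality. Extracting the tight additive error $[-0.21, +2]$ promised after the theorem for $\lqn$ will then amount to careful rounding bookkeeping in this aggregation. On the upper-bound side, the delicate step is arranging the auxiliary queues to be vertex-disjoint from the main $A$-queues while still covering every bipartite edge between $A$ and $B$ within the allotted number of union queues.
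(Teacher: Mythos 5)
Your reduction of the theorem to the two one-sided bounds $\lqn(K_n) \geq (1-\frac{1}{\sqrt{2}})n - \mathcal{O}(1)$ and $\uqn(K_n) \leq (1-\frac{1}{\sqrt{2}})n + \mathcal{O}(1)$ is exactly the paper's structure, and you correctly flag that a single-vertex crossing count cannot suffice for the lower bound. But both halves of your plan contain genuine gaps.

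For the lower bound, the paper's \cref{lem:queue-LB} does not count crossing edges at one or a few chosen vertices. It classifies every edge $v_iv_j$ as \emph{left-longest} or \emph{right-shortest} within its queue (every edge is at least one of these, since a queue cannot contain two edges $v_{i'}v_j$ and $v_iv_{j'}$ with $i' < i < j' < j$), sets $\ell_i, r_i$ to the counts of such edges anchored at $v_i$, and $b_i$ to the number of queues meeting $v_i$ on both sides. The identity that $v_i$ meets exactly $\ell_i + r_i - b_i$ queues, summed over \emph{all} $i$ and combined with $b_i \leq \min\{i-1, n-i, k\}$, gives $kn \geq \binom{n}{2} - \sum_i b_i$ and then the quadratic in $k$. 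Your sketch finds the right quantity $\min(i-1,n-i)$ per queue, but gives no mechanism for controlling queues not incident to the chosen vertex; ``invoking the local condition at other vertices'' is precisely the missing step, and without the left-longest/right-shortest bookkeeping (or an equivalent global double count) there is no evident way to close it.

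For the upper bound, the bipartite split with $|A| \approx (2-\sqrt{2})n$ produces the right coefficient for $\lfloor|A|/2\rfloor$, but the construction behind it would not reach $(1-\frac{1}{\sqrt{2}})n$ union queues. In the optimal $\lfloor a/2\rfloor$-queue layout of $K_a$ each queue spans essentially all of $A$, so an auxiliary queue vertex-disjoint from it may only use vertices of $B$; it can cover edges of $K[B]$ but none of the $|A|\cdot|B|$ bipartite edges, which then require on the order of $\min(|A|,|B|)/2 \approx (\sqrt{2}-1)n/2$ further union queues, pushing the total back to roughly $n/2$. The paper's \cref{lem:uqn-UB-triangle} instead works in the triangular grid $T_n$: $k$ hook-shaped chains $L_1,\dots,L_k$ cover all ``short'' edges while leaving a corner sub-triangle $T_{n-2k}$ of ``long'' edges, which is covered by $\mathcal{O}(n)$ vertical and horizontal star-chains; these are greedily colored so that each color class, after discarding a bounded number of ``bad points,'' merges with one $L_i$ into a single union chain. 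Reproducing the paper's bound essentially requires rediscovering this (or a comparable) triangle decomposition; a vertex bipartition alone does not suffice.
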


The local page number of $K_n$ is also linear in $n$ with the leading
coefficient being $1/3$. In this case, the error term $ \mathcal{O}(1) $
is between $0$ and $4$.
The local page number also gives a lower bound on the union page number, whereas the leading coefficient of our upper bound is $ 4/9 $.
\begin{theorem}\label{thm:page-number} 
 The local page number and the union page number of $K_n$ satisfy
 \begin{equation*}
  \frac{1}{3}n \pm \mathcal{O}(1) = \lpn(K_n) \leq
  \upn(K_n) \leq \frac{4}{9}n + \mathcal{O}(1).
 \end{equation*}
\end{theorem}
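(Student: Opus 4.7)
The statement bundles four inequalities: $\lpn(K_n)\geq n/3-\mathcal{O}(1)$, $\lpn(K_n)\leq n/3+\mathcal{O}(1)$, $\lpn(K_n)\leq\upn(K_n)$, and $\upn(K_n)\leq 4n/9+\mathcal{O}(1)$. The middle one is a general fact that I would handle first: given any partition of $E(K_n)$ into $k$ union pages, refine each union page into its constituent vertex-disjoint pages. Within one union page each vertex meets at most one sub-page by vertex-disjointness, so the refined ordinary page partition has every vertex in at most $k$ pages, proving $\lpn(G)\leq\upn(G)$ for every graph.

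For the lower bound I would fix an arbitrary $k$-local book embedding with pages $P_1,\dots,P_t$, vertex sets $V_i$, and edge sets $E_i$. The outerplanarity bound $|E_i|\leq 2|V_i|-3$ together with the locality bound $\sum_i|V_i|\leq kn$ immediately gives $\binom{n}{2}\leq 2kn-3t$, but this only yields the trivial consequence $k\geq(n-1)/4$. The plan is to strengthen the edge count by a discharging/averaging argument that uses the distribution of the short edges $v_jv_{j+1}$ across pages: a page attaining the triangulated extremum $|E_i|=2|V_i|-3$ must carry a full non-crossing Hamiltonian boundary on $V_i$, and these boundary edges compete across pages, so only a controlled fraction of the $\sum_i|V_i|\leq kn$ vertex-page incidences can realize the extremum. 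The target is an averaged inequality of shape $\sum_i|E_i|\leq \tfrac{3}{2}\sum_i|V_i|+\mathcal{O}(n)$, which combines with $\sum_i|V_i|\leq kn$ and $\sum_i|E_i|=\binom{n}{2}$ to yield $k\geq n/3-\mathcal{O}(1)$.

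For the matching upper bound on $\lpn(K_n)$ I would give an explicit construction with the natural ordering $v_1,\dots,v_n$. A modular family of pages obtained as cyclic shifts of one or two non-crossing templates (a zig-zag Hamiltonian path together with a fan-type page) should cover $E(K_n)$ and place each vertex in only $n/3+\mathcal{O}(1)$ pages; the arithmetic of the shifts essentially forces the $1/3$. For the upper bound on $\upn(K_n)$, I plan to partition $V$ into interleaved blocks of constant size and bundle pages that act on disjoint blocks into a single union page. Since union pages allow vertex-disjoint pages to coexist, one can process several bipartite inter-block pieces in parallel; a careful accounting over the inter- and intra-block subgraphs, with block size chosen so that the average number of simultaneously active blocks per union page is $4/9$ of the total, should yield the claimed $4n/9+\mathcal{O}(1)$.

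The main obstacle is clearly the lower bound. The outerplanar edge count alone leaves a factor-$4/3$ gap, and closing it requires identifying a global structural inefficiency of non-crossing pages in a dense graph, not merely a per-page bound. Making the discharging tight enough to lose only an $\mathcal{O}(1)$ additive term is delicate. The two constructive upper bounds and the refinement argument for union pages are, in comparison, routine once the right templates and block size are in hand.
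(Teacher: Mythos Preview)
Your reduction to the target inequality $\sum_i|E_i|\leq\tfrac{3}{2}\sum_i|V_i|+\mathcal{O}(n)$, equivalently $\sum_i|V_i|\geq n^2/3-\mathcal{O}(n)$, is exactly right: this is what the paper proves, phrased for the \emph{average} $k=\tfrac{1}{n}\sum_i|V_i|$ of pages per vertex. But your proposed mechanism---``boundary edges compete across pages''---does not reach it. Competition alone says each edge of $K_n$ is a black hull edge of at most one page, i.e.\ $B\leq\binom{n}{2}$, and feeding that into the outerplanar count recovers only $k\geq(n-1)/4$. The missing idea is a \emph{length} argument coupled with convexity. For each page $P$ the convex-hull cycle on $V_P$ has total cyclic length exactly $n$, so $|\calP|\cdot n$ equals the sum, over all vertex--page incidences $(v,P)$, of the length of the clockwise forward hull edge of $v$ in $P$. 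At a fixed vertex $v$ the $b_v$ \emph{black} forward edges (those that actually belong to their page) are pairwise distinct edges of $K_n$, hence have pairwise distinct lengths, so they contribute at least $1+2+\cdots+b_v\geq b_v^2/2$. Cauchy--Schwarz converts $\sum_v b_v^2$ into $B^2/n$, yielding $|\calP|\geq B^2/(2n^2)$. Only after combining this lower bound on $|\calP|$ with the green-edge count $\binom{n}{2}-B\leq\sum_i|V_i|-3|\calP|$ do you obtain the quadratic $0\geq\tfrac{3}{2}k^2-2kn+\binom{n}{2}$ whose smaller root is $n/3-\mathcal{O}(1)$. Your sketch mentions the short edges $v_jv_{j+1}$, but the argument must track \emph{all} hull-edge lengths, not just the unit ones; without this convexity step the factor-$4/3$ gap does not close.

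On the constructive side, your rotation-of-templates idea for $\lpn$ matches the paper in spirit; the actual template is a single six-vertex maximal outerplanar graph (degenerating to five vertices for one parameter value), with the arithmetic working cleanly when $n=18k-3$. Your plan for $\upn$, however, diverges from the paper, which does not partition into blocks. Instead it builds $n/3$ union pages, each from three rotated copies of a five-vertex fan together with one extra edge, covering $7/9$ of all cyclic edge lengths; the remaining $2/9$ of the lengths are then handled by $n/9+\mathcal{O}(1)$ further union pages consisting of star forests and perfect matchings, giving $4n/9+\mathcal{O}(1)$ in total. A block-bundling scheme might also succeed, but nothing in your description forces the constant $4/9$; you would need to specify exactly which inter-block bipartite pieces share a union page and verify both vertex-disjointness and the page count.
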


\section{Preliminaries}
\label{sec:prel}

In this section, we summarize some known results on local and union linear layouts, which are presented in~\cite{mu-19,mu-20} in detail.

First, we have $ \lpn(G) \leq \upn(G) \leq \pn(G) $ and $ \lqn(G) \leq \uqn(G) \leq \qn(G) $, where the gap between the union and global variants can be arbitrarily large.
There are graph classes (e.g.\ $ k $-regular graphs for $ k \geq 3 $) with bounded union queue number and union page number but unbounded queue number and page number.
In contrast, the local page number, the local queue number, the union page number, and the union queue number are all tied to the maximum average degree, which is defined by $ \mad(G) = \max \{ 2 \, |E(H)| / |V(H)| \colon H \subseteq G, H \neq \emptyset \} $.
In particular, we have the following connection between the maximum average degree and the union queue number, respectively the union page number.
We follow the proof of a similar statement for book embeddings in~\cite{mu-19}.
However, we stress that the vertex ordering is arbitrary as we make use of this in the proof of \cref{lem:uqn-UB-triangle}.
\begin{proposition}
    \label{prop:uqn-mad}
    Every graph $ G $ 
    admits a $ (\mad(G) + 2) $-union queue layout and a $ (\mad(G) + 2) $-union book embedding with any vertex ordering.
\end{proposition}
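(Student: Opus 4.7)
The plan is to decompose $E(G)$ into at most $\mad(G)+2$ star forests. Once this is done, the claim follows immediately: every star is both a queue and a page, since all its edges share the centre and therefore no two of them are independent, so in any spine ordering they neither nest nor cross. A vertex-disjoint union of stars is then by definition a union queue and a union page, so each class of the decomposition is directly a part of the desired $(\mad(G)+2)$-union queue layout and the desired $(\mad(G)+2)$-union book embedding.

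The decomposition is carried out in two steps. In the first step I invoke Nash-Williams' formula $a(G) = \max_{H \subseteq G,\,|V(H)| \geq 2} \lceil |E(H)|/(|V(H)|-1) \rceil$ together with the estimate
\[
  \frac{|E(H)|}{|V(H)|-1} = \frac{\mad(H)}{2} + \frac{\mad(H)}{2(|V(H)|-1)} \leq \frac{\mad(G)}{2} + \frac{1}{2},
\]
whose last inequality uses the trivial bound $\mad(H) \leq |V(H)|-1$ valid for every simple graph. This yields $a(G) \leq \lceil (\mad(G)+1)/2 \rceil$, so $E(G)$ partitions into at most that many forests. In the second step I decompose every such forest into two star forests: root each tree arbitrarily and assign every edge to one of the two classes according to the parity of the depth of its parent; within each class, the vertices of that parity are the centres of vertex-disjoint stars formed by their children, so the class is indeed a star forest.

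Combining both steps partitions $E(G)$ into at most $2\lceil (\mad(G)+1)/2 \rceil \leq \mad(G)+2$ star forests, and the initial observation upgrades this to the desired union queue layout and union book embedding under the given spine ordering. The only real subtlety is the arboricity estimate, where the trick $\mad(H) \leq |V(H)|-1$ saves exactly the factor we need; after that the construction is routine and, crucially, never refers to the vertex ordering, which is precisely the feature that Lemma~\ref{lem:uqn-UB-triangle} later exploits.
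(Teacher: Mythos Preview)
Your argument is correct and follows exactly the paper's route: Nash--Williams to bound the arboricity, then split each forest into two star forests, then observe that a star forest is simultaneously a union queue and a union page under any spine ordering; you simply spell out the two cited results instead of quoting them. Two small quibbles: in the displayed line the equality only holds if you read $\mad(H)$ as the average degree $2|E(H)|/|V(H)|$ of $H$ (otherwise replace ``$=$'' by ``$\leq$''), and the final step $2\lceil(\mad(G)+1)/2\rceil \leq \mad(G)+2$ can overshoot by one when $\mad(G)$ is non-integral with odd floor---but the paper's own proof is equally informal about this rounding, and in its only application (inside \cref{lem:uqn-UB-triangle}) the degree bound is an integer.
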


\begin{proof}
    Nash-Williams~\cite{n-64} proved that every graph can be partitioned into at most $ \mad(G) / 2 + 1 $ forests.
    Each forest, in turn, can be partitioned into two star forests~\cite{aa-89}.
    Choosing an arbitrary vertex ordering, each of the $ \mad(G) + 2 $ star forests is both a union queue and a union page as the edges of a star can neither nest nor cross.
\end{proof}    

In addition, the local queue number and the local page number are lower-bounded by $ \mad(G)/4 $, which gives a lower bound of $ (n - 1)/4 $ for $ K_n $.  
    
In the following sections, we consider the complete graph $K_n$ with vertex set
$V(K_n) = \{v_1,\ldots,v_n\}$.  Due to symmetry, we may throughout assume that
the vertex ordering of $K_n$ is given by $v_1 \prec \cdots \prec v_n$.
Partitioning the edges is the difficult part.

\section{Local and Union Queue Numbers}
\label{sec:queue-number}

We first establish the lower bound of \cref{thm:queue-number}.
As the union queue number is lower-bounded by the local queue number, we only consider the latter.
Note that $ (9-4\sqrt{2})/16 \approx 0.21 $.
\begin{lemma}\label{lem:queue-LB} For any $n$ we have
  $\displaystyle\lqn(K_n) > (1-\frac{1}{\sqrt{2}})n - \frac{1}{16}(9-4\sqrt{2})$.
\end{lemma}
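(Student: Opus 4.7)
The strategy is to pick a single pivot vertex $v_m$ (with the integer $m$ to be optimized at the end) and count, in two different ways, the $(m-1)(n-m)$ edges of $K_n$ that \emph{cross} $v_m$, i.e., the edges $v_av_b$ with $a<m<b$.

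The structural step is to bound, for a queue $Q$ incident to $v_m$, how many of its edges cross $v_m$. Let $\ell_Q$ and $r_Q$ be the numbers of left and right neighbors of $v_m$ in $Q$, and let $p_Q=\max L_Q$ and $q_Q=\min R_Q$ be the closest such neighbors. A short case analysis shows that every crossing edge $v_av_b\in Q$ independent of the fan at $v_m$ satisfies $p_Q\le a<b\le q_Q$ — otherwise it nests with one of the two innermost fan edges $v_{p_Q}v_m$ or $v_mv_{q_Q}$. Such edges of $Q$ form a chain in the bipartite containment poset on $[p_Q,m-1]\times[m+1,q_Q]$, so they number at most $q_Q-p_Q-1$; combined with the pigeonhole bounds $p_Q\ge\ell_Q$ and $q_Q\le n-r_Q+1$ this is at most $n-\ell_Q-r_Q$. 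An analogous chain bound handles queues $Q$ \emph{not} incident to $v_m$: their crossings through $v_m$ are chains in the bipartite poset on $V(Q)\cap[1,m-1]$ against $V(Q)\cap[m+1,n]$, hence at most $|V(Q)\cap[1,m-1]|+|V(Q)\cap[m+1,n]|-1$.

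Summing the first bound over the $k^*\le k$ queues at $v_m$ and using $\sum_{Q\ni v_m}(\ell_Q+r_Q)=n-1$ caps the crossings handled inside queues at $v_m$ by $k^*n-(n-1)$. Summing the second bound over queues not at $v_m$, together with the local budget $\sum_Q|V(Q)|\le kn$ and the lower estimate $\sum_{Q\ni v_m}|V(Q)|\ge(n-1)+k^*$, caps the remaining crossings in terms of $k$. Balancing both contributions against $(m-1)(n-m)$ produces an inequality of the shape $(m-1)(n-m)\le f(k,m,n)$ that is linear in $k$ and quadratic in $m$. Optimizing the pivot $m$ — which will sit near $n/\sqrt{2}$ rather than $n/2$ — converts this into a quadratic inequality in $k/n$ whose positive root is exactly $1-1/\sqrt{2}$, and the tiny constant $\tfrac{1}{16}(9-4\sqrt{2})$ drops out of the discriminant together with the integrality of $m$.

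The main obstacle, I expect, will be keeping the ``not at $v_m$'' contribution sharp: a naive chain bound $|V(Q)|-1$ per queue is wasteful and only recovers the density lower bound $(n-1)/4$. To reach $(1-1/\sqrt{2})n$ one has to exploit further interactions — for instance, a Cauchy--Schwarz-type manoeuvre on $|V(Q)\cap[1,m-1]|\cdot|V(Q)\cap[m+1,n]|$, or the observation that the extreme vertices of each such queue impose their own fan constraints on which crossings are realizable. Once such a refined inequality is in hand, the optimization over $m$ and the explicit discriminant calculation reduce to bookkeeping.
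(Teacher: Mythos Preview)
Your proposal is not a proof but a sketch with the decisive step left open, and the sketch itself has a structural problem that makes the stated bound unreachable by this route.

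The gap you flag yourself is fatal. For queues not touching the pivot $v_m$, the chain bound gives at most $|V(Q)\cap[1,m-1]|+|V(Q)\cap[m+1,n]|-1=|V(Q)|-1$ crossings, and summing this with $\sum_Q|V(Q)|\le kn$ loses all geometric information about where those vertices sit. Your two suggested refinements do not repair this: Cauchy--Schwarz on $a_Qb_Q$ addresses the wrong quantity (the crossing count is bounded by a sum $a_Q+b_Q$, not a product), and ``extreme vertices impose fan constraints'' is circular, since those fans live at vertices other than $v_m$ and you are only counting at $v_m$. More tellingly, your asserted shape of the outcome is inconsistent: an inequality $(m-1)(n-m)\le f(k,m,n)$ that is \emph{linear} in $k$ stays linear in $k$ after optimizing $m$, so it cannot turn into a quadratic in $k/n$ with root $1-1/\sqrt{2}$. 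The $k^2$ term is essential --- in the paper's argument it is what produces the $\sqrt{2}$ --- and a single-pivot count has no mechanism to generate it.

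The paper's proof works globally rather than locally. It introduces, for every vertex $v_i$ simultaneously, the counts $\ell_i$ (edges that are \emph{left-longest} in their queue with right endpoint $v_i$), $r_i$ (edges \emph{right-shortest} with left endpoint $v_i$), and $b_i$ (queues containing $v_i$ with neighbors on both sides). Two observations drive everything: every edge is left-longest or right-shortest (else its queue would contain a nesting pair), so $\sum_i(\ell_i+r_i)\ge\binom{n}{2}$; and the number of queues at $v_i$ is exactly $\ell_i+r_i-b_i$, so $kn\ge\sum_i(\ell_i+r_i-b_i)\ge\binom{n}{2}-\sum_ib_i$. The quadratic term enters through the elementary bound $b_i\le\min\{i-1,n-i,k\}$, whose sum over all $i$ is $2\binom{k}{2}+(n-2k)k$. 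This yields $0\ge k^2+(1-2n)k+\binom{n}{2}$, and solving gives the stated constant. The point is that the $k^2$ arises from summing the boundary constraints $b_i\le i-1$ and $b_i\le n-i$ over the $2k$ extreme vertices --- exactly the information a single pivot cannot see.
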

\begin{proof} Consider a $k$-local queue layout $\calQ$ of $K_n$.  Without
loss of generality, each edge is contained in exactly one queue.  Moreover, the
vertices are ordered $v_1 \prec \cdots \prec v_n$ and the length of an edge $v_iv_j$
is defined as $|i-j|$.  Now for any edge $e = v_iv_j$ with $i < j$ consider
the queue $Q \in \calQ$ containing $e$.  We call~$e$ \emph{left-longest} if
there is no edge in $Q$ that is longer than $e$ and has the same right
endpoint as $e$, i.e., $Q$ contains no edge $v_{i'}v_j$ with $i' < i$.
Similarly, we call $e = v_iv_j \in Q$ \emph{right-shortest} if there is no
edge in $Q$ that is shorter than $e$ and has the same left endpoint as $e$,
i.e., $Q$ contains no edge $v_iv_{j'}$ with $i < j' < j$.  We have that
 \begin{enumerate}[(i)]
  \item every edge of $K_n$ is left-longest or right-shortest (or both).\label{enum:left-or-right-bad}
 \end{enumerate} In fact, if $v_iv_j \in Q$ is of neither type, then $Q$ would
contain two edges $v_{i'}v_j$ and $v_iv_{j'}$ with $i' < i < j' < j$, and
hence $Q$ would not be a queue.
 
For each vertex $v_i$ let $\ell_i$, respectively $r_i$, denote the number of
left-longest edges whose right endpoint is $v_i$, respectively the number of
right-shortest edges whose left endpoint is $v_i$.  That is,
 \begin{itemize}
  \item[] $\ell_i = \# \{ v_a \in V(K_n) \mid a < i \text{ and } v_av_i \text{
left-longest} \}$ and
  
  \item[] $r_i = \# \{ v_b \in V(K_n) \mid i < b \text{ and } v_iv_b \text{
right-shortest} \}$.
 \end{itemize} Further let $b_i$ denote the number of queues in $\calQ$ with
at least one edge whose right endpoint is $v_i$ \emph{and} at least one edge
whose left endpoint is $v_i$.  That is,
 \begin{itemize}
  \item[] $b_i = \# \{ Q \in \calQ \mid \exists a,b \text{ with } a < i < b
\text{ and } v_av_i,v_iv_b \in Q\}$.
 \end{itemize} We can then write the number of queues in $\calQ$ containing
 the vertex $v_i$ in terms of $\ell_i$, $r_i$ and $b_i$. Indeed, if $Q\in\calQ$
 contains an edge incident to $v_i$, then it contains a left-longest or a
 right-shortest or both, i.e., the contribution of $Q$ to $\ell_i + r_i - b_i$
 is exactly one.
 \begin{enumerate}[(i)]
  \setcounter{enumi}{1}
  \item Vertex $v_i$ has incident edges in exactly $\ell_i + r_i - b_i$ queues
in $\calQ$.\label{enum:incident-queues}
 \end{enumerate} 
 As every vertex is in at most $k$ queues, we have $b_i \leq k$ for
$i=1,\ldots,n$.  Also every vertex $v_i$ is the right endpoint of at most
$i-1$ edges and thus $b_i \leq i-1$.  Similarly, $v_i$ the left endpoint of at
most $n-i$ edges and thus $b_i \leq n-i$.  Together,
 \begin{enumerate}[(i)]
  \setcounter{enumi}{2}
  \item for every vertex $v_i$ we have $b_i \leq \min
\{i-1,n-i,k\}$.\label{enum:both-upper-bound}
 \end{enumerate}
 Using the above and assuming $ k \leq n/2 $ (we are done otherwise), we calculate
%
 \begin{align*} 
    kn 
    &\geq \sum_{i=1}^n \# \{ Q \in \calQ \mid v_i \in V(Q)\}
    \overset{\ref{enum:incident-queues}}{=} \sum_{i=1}^n (\ell_i + r_i - b_i)
    \overset{\ref{enum:left-or-right-bad}}{\geq} |E(K_n)| - \sum_{i=1}^n b_i \\
    &\overset{\ref{enum:both-upper-bound}}{\geq} \binom{n}{2} - \sum_{i=1}^k (i-1) - \sum_{i=n-k+1}^n (n-i) - (n-2k)k\\
    &= \binom{n}{2} - 2\binom{k}{2} - (n-2k)k.
 \end{align*}
 For $a\geq 1$ and $b>0$ we have
 \begin{equation}
  \sqrt{a+b} < \sqrt{a} + b/2. \label{eq:sqrt-bounds}
 \end{equation}
 Using this we get the desired
 bound for $k$ as follows. 
 \begin{align*}
   kn &\geq \binom{n}{2} - 2\binom{k}{2} - (n-2k)k \\
  \iff \quad 0 &\geq k^2 + (1-2n)k + \binom{n}{2}\\
  \implies \quad k &\geq (n-\frac{1}{2}) - \sqrt{ (n-\frac{1}{2})^2 - \binom{n}{2} }
  = (n-\frac{1}{2}) - \sqrt{\frac{1}{2}(n^2-n + \frac{1}{2})} \\
  &= (n-\frac{1}{2}) - \sqrt{\frac{1}{2}(n - \frac{1}{2})^2 + \frac{1}{8}}
  \overset{\eqref{eq:sqrt-bounds}}{>} (n-\frac{1}{2}) - \sqrt{\frac{1}{2}(n - \frac{1}{2})^2} - \frac{1}{16}\\
  &=  (1-\frac{1}{\sqrt{2}})(n-\frac{1}{2}) - \frac{1}{16} 
  = (1-\frac{1}{\sqrt{2}})n - \frac{1}{16}(9-4\sqrt{2})\qedhere 
  %
\end{align*}
\end{proof}



Now let us turn to the upper bound of the union queue number in \cref{thm:queue-number}.
We thereby also prove an upper bound for the local queue number.
However, we improve on this bound with a different construction in \cref{lem:local-queue-UB}.

\begin{lemma}
    \label{lem:uqn-UB}
    For any $ n \geq 0 $, we have 
    \[ \lqn(K_n) \leq \left\lceil 1-\frac{1}{\sqrt{2}} \right\rceil n + 11 \text{ and } \uqn(K_n) \leq \left\lceil 1-\frac{1}{\sqrt{2}} \right\rceil n + 42. \]
\end{lemma}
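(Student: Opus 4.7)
The plan is to construct an explicit (union) queue layout of $K_n$ meeting the stated bounds. Fix $k := \lceil(1-\frac{1}{\sqrt{2}})n\rceil$ and use the natural vertex ordering $v_1 \prec \cdots \prec v_n$. The basic building block is the ``double-star'' queue
\[Q_j := \{v_jv_l : j < l \leq n+1-j\} \cup \{v_iv_{n+1-j} : j \leq i < n+1-j\},\]
defined for $j = 1, \ldots, k$. A short case analysis shows $Q_j$ is a valid queue: every pair of its edges either shares the pivot $v_j$ or $v_{n+1-j}$, or else consists of a $v_jv_l$ and a $v_iv_{n+1-j}$ with $v_j \prec v_{\min(i,l)} \prec v_{\max(i,l)} \prec v_{n+1-j}$, so the two edges cross rather than nest. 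Together, the $Q_j$'s cover every edge $v_iv_l$ with $\min(i, n+1-l) \leq k$, leaving uncovered only the edges inside the interior $I := \{v_{k+1}, \ldots, v_{n-k}\}$, which form a copy of $K_{n-2k}$ with $n-2k = (\sqrt{2}-1)n \pm \mathcal{O}(1)$.

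The crux is covering $K_{n-2k}$ while keeping the total queue count at $k + \mathcal{O}(1)$; a straightforward recursion $T(n) \leq k + T(n-2k)$ only yields $T(n) \leq n/2 + \mathcal{O}(1)$, falling short of the target. My plan is to exploit the flexibility of \emph{union} queues: trim each $Q_j$ of some edges touching interior vertices (thereby freeing space), and attach a vertex-disjoint sub-queue on a chosen subset of $I$ to each $Q_j$, distributing the edges of $K_{n-2k}$ across the $k$ inner sub-queues. The trimmed edges together with any residue are recovered by $\mathcal{O}(1)$ auxiliary queues via \cref{prop:uqn-mad} applied, with a tailored vertex ordering, to a residual subgraph of bounded maximum average degree — here the arbitrary-ordering freedom noted earlier is essential. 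The differing additive constants $+11$ (for $\lqn$) and $+42$ (for $\uqn$) come from separate refined accountings of the residual queues per vertex versus in total.

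For locality, a vertex $v_i$ with $i \leq k$ appears in $Q_j$ only for $j \leq i$ (as a leaf or pivot of the double-star of $Q_j$), contributing at most $k$; by symmetry the same holds for $v_i$ with $i \geq n-k+1$; and interior vertices lie in all $k$ double-stars while gaining only $\mathcal{O}(1)$ additional queues from the inner sub-queues and auxiliaries. For the union count, the total is $k + \mathcal{O}(1)$ by design.

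The main obstacle is making the interior absorption precise: the trimming, sub-queue placement, and residual handling must be coordinated so that every edge is covered, each $Q_j$ remains a valid union queue, and the residual subgraph has bounded maximum average degree. I expect this to factor through a dedicated triangle lemma (likely the one referred to as \texttt{lem:uqn-UB-triangle} in the discussion following \cref{prop:uqn-mad}) that isolates the residual bookkeeping by leveraging the arbitrary-ordering version of \cref{prop:uqn-mad}.
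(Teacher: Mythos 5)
Your double stars $Q_j$ are indeed valid queues (your crossing-vs-nesting case analysis is right), and the coverage claim — that the $Q_j$'s leave exactly the complete graph $K_{n-2k}$ on the interior interval $I$ — is correct. However, the trim-and-absorb continuation cannot be completed with only $\mathcal{O}(1)$ auxiliary queues; this is a fatal gap, not a missing detail, and it can be seen by a short counting argument. Let $S_j\subseteq I$ be the set of interior vertices on which you attach sub-queues to $Q_j$. To keep $Q_j$ a vertex-disjoint union of queues you must trim from it the two edges $v_jv_l$ and $v_lv_{n+1-j}$ for every $l\in S_j$, giving exactly $2|S_j|$ trimmed edges. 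On the other hand, the sub-queues attached to $Q_j$ form a vertex-disjoint union of queues on $S_j$, and since any graph with a $1$-queue layout on $m$ vertices has at most $2m-3$ edges, they carry at most $2|S_j|$ edges in total. Hence the residue (interior edges left over by the sub-queues) has at least $\binom{n-2k}{2}-2\sum_j|S_j|$ edges, and
\[
 \#\{\text{trimmed}\}+\#\{\text{residue}\}\ \ge\ 2\sum_j|S_j|+\binom{n-2k}{2}-2\sum_j|S_j|\ =\ \binom{n-2k}{2}\ =\ \Theta(n^2),
\]
no matter how the $S_j$ are chosen. A graph with $\Theta(n^2)$ edges on $n$ vertices has $\mad=\Theta(n)$, so \cref{prop:uqn-mad} produces $\Theta(n)$ auxiliary union queues, not $\mathcal{O}(1)$. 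Equivalently: a successful absorption in which every interior vertex lies in only $\mathcal{O}(1)$ of the sets $S_j$ would be an $\mathcal{O}(1)$-local queue layout of $K_{n-2k}$, contradicting \cref{lem:queue-LB}.

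The paper's construction avoids this obstruction precisely because it does not start from full double stars. Its chains $L_1,\ldots,L_k$ pivot at $v_i$ and at its mirror only for the roughly $2(k-i+1)$ \emph{shortest} incident edges and then bridge the two star stubs with a zig-zag that runs near the hypotenuse of the triangular array $T_n$; the uncovered region is then the sub-triangle $T_{n-2k}$ of long-ish edges, \emph{not} the complete graph on a vertex interval. The hooks through $T_{n-2k}$ are exactly the ones for which the $L_i$'s leave slack (hook $H_x$ with $x\le k$ meets only $L_1,\ldots,L_x$), and the families $\mathcal{A},\ldots,\mathcal{G}$ of vertical and horizontal star chains are engineered so that the number of stars through each hook consumes just that slack. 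In your layout every interior vertex already has locality exactly $k$ from the $Q_j$'s and faces a complete interior subgraph; there is no slack and no room for stars. Salvaging the double-star skeleton would essentially require reintroducing a zig-zag so the $Q_j$'s avoid some interior vertices — which is the paper's construction.
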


We prove that whenever $k \geq (1-1/\sqrt{2})(n + 1)$, there is a
$(k+11)$-local queue layout and a $ (k + 42) $-union queue layout of $K_{n+1}$.  
Let $v_1 \prec \dots \prec v_{n+1}$ be a
fixed vertex ordering of $K_{n+1}$.  For ease of presentation, we model the edge
set of $K_{n+1}$ as a point set $T_n$ in $\mathbb{Z}^2$ with triangular shape defined by
\[
  T_n = \{ (x, y) \in \mathbb{Z}^2 \mid x + y \leq n+1;\ x \geq 1;\ y \geq 1\}.
\]
The elements in $ T_n $ correspond to the entries of the adjacency matrix of $ K_{n + 1} $.
That is, element $(x, y)$ of $T_n$ corresponds to edge $ v_{n+2-y} v_x $ in $K_{n+1}$
and conversely edge $ v_i v_j $ in $K_{n+1}$ with $i > j$ corresponds to element
$(j, n+2-i)$ in $T_n$.  
Two edges $v_iv_j$ with $ i > j $ and $v_{i'}v_{j'}$
with $ i' > j'$ nest if and only if the corresponding elements $(j, n + 2 - i)$
and $(j', n + 2 - i')$ in $T_n$ are comparable in the strict dominance order of
$\mathbb{Z}^2$ (i.e.\ coordinate-wise strict inequalities of points).  
To see this, observe that small $ y $-coordinates correspond to a left endpoint having a small index, whereas small $ x $-coordinates correspond to a right endpoint with a large index.
Hence, an edge set $Q \subseteq E(K_{n+1})$ forms a queue if and only if the
corresponding points in $T_n$ form a weakly monotonically decreasing chain, see
\cref{fig:exampleQ}.

\begin{figure}
    \centering
    \includegraphics[height = 8em]{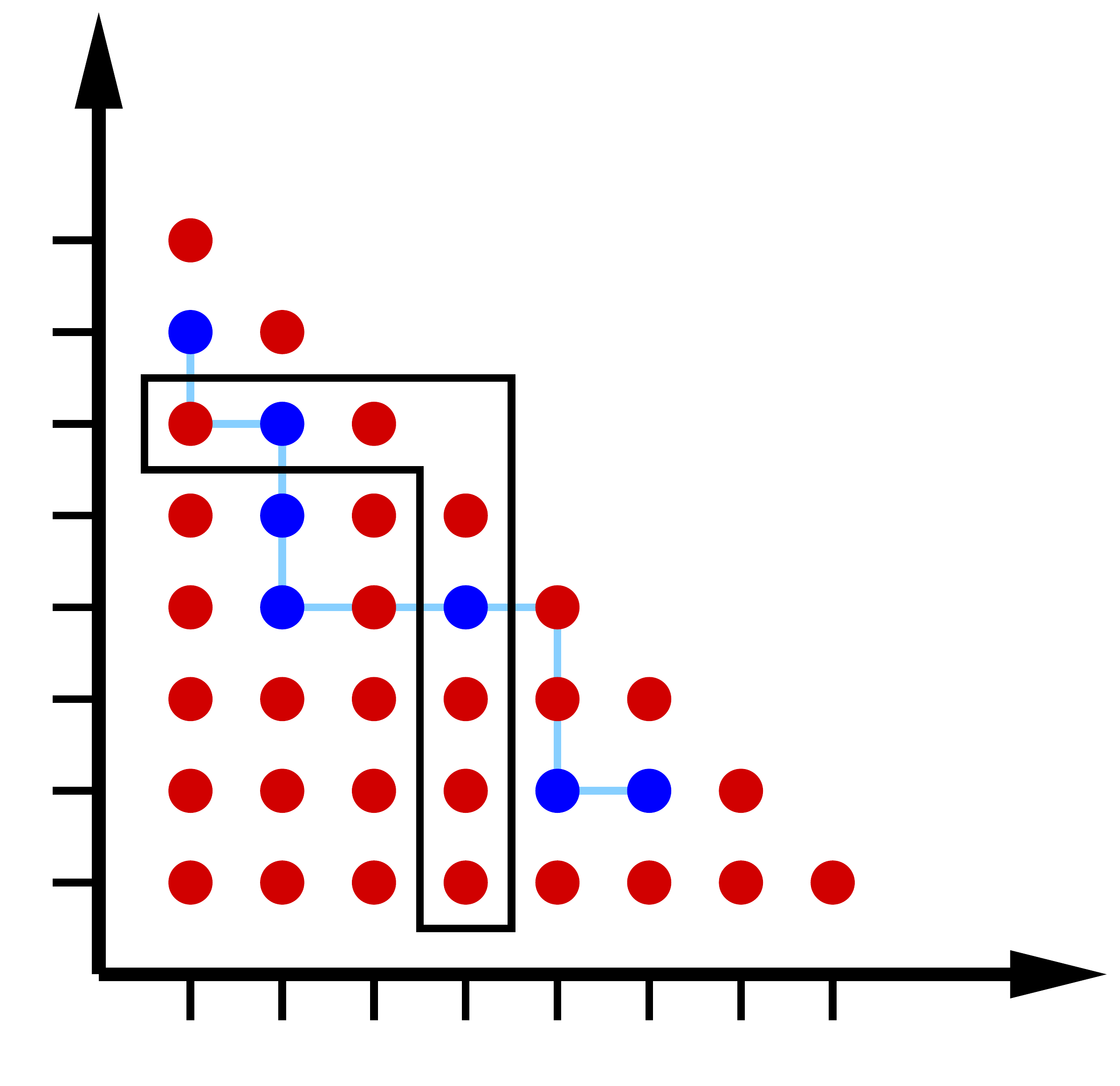}
    \hspace{2em}
    \includegraphics{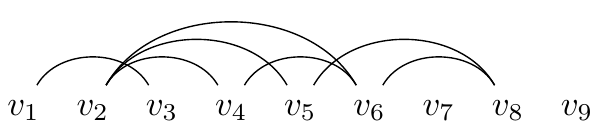}
    \caption{
        Left: Triangle $T_8$ corresponding to $K_9$ and the hook of vertex $4$.
        The blue entries represent a queue, the light blue zig-zag shows that the edges are non-nesting.
        Right: Linear layout of the blue queue.
    }
    \label{fig:exampleQ}
\end{figure}

A vertex $v_i$ of $K_{n+1}$ corresponds to column $i$ and row $n+2-i$ in
$T_n$.  We call the union of column $i$ and row $n+2-i$ the \emph{hook} of
vertex $v_i$.  If $H$ is the hook of vertex $v_i$ and $Q$ is a queue
corresponding to chain $C \subseteq T_n$, then vertex~$v_i$ is contained in
queue $Q$ if and only if $H \cap C \neq \emptyset$.
For our construction of a $(k+11)$-local queue assignment of $K_{n+1}$, we use the equivalent model of covering the triangular point set $T_n$ with monotone chains such that no hook intersects more than $k+11$ chains.

Analogously to union queues, we call a subset $ S \subseteq T_n $ a \emph{union chain} if there is a partition of $ S $ into weakly monotonically decreasing chains such that each hook intersects at most one of them.
To prove \cref{lem:uqn-UB}, we partition $ T_n $ into $ k + 42 $ union chains and therefore get a $ (k + 42) $-union queue layout for $ K_{n + 1} $.
 
\begin{lemma} 
    \label{lem:uqn-UB-triangle}
    For any integer $n \geq 0$ and any integer $k \geq (1-1/\sqrt{2})(n + 1)$, the points of $T_n$ can be partitioned into $ k + 42 $ union chains.
    In addition, the points of $ T_n $ can be partitioned into weakly monotonically decreasing chains such that each hook intersects at most $ k + 11 $ chains.
\end{lemma}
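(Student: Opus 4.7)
The plan is to give an explicit construction matching both bounds. Fix $k \geq (1-1/\sqrt{2})(n+1)$ and set $m := n + 1 - k$, so $m \leq (n+1)/\sqrt{2}$. The main geometric idea is to decompose $T_n$ along the anti-diagonal $x + y = m + 1$ into two regions: the outer strip $A = \{(x,y) \in T_n : x+y \geq m+1\}$, which consists of the $k$ anti-diagonals closest to the hypotenuse, and a sub-triangle $B = \{(x,y) \in T_n : x+y \leq m\} \cong T_{m-1}$.

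For the local statement, I would cover $A$ by its $k$ native anti-diagonals $D_{m+1}, \ldots, D_{n+1}$, each of which is a strict antichain in the dominance order and hence a chain in our terminology. A short calculation shows that, since $m \geq (n+1)/\sqrt{2} > (n+1)/2$, each such $D_c$ touches every hook of $T_n$, so these $k$ chains contribute exactly $k$ to every hook's touch-count. The sub-triangle $B$ must then be covered by chains contributing at most $11$ additional chain-incidences per hook. The design for $B$ exploits the observation that each hook $H_i$ of $T_n$ restricts to $B$ as at most a column segment (if $i \leq m-1$), a row segment (if $i \geq n+3-m$), or both (for $i$ in the central overlap band $[n+3-m,\, m-1]$); I would combine partial columns, partial rows, and L-shaped antichains so that each of the three cases contributes only a constant number of chains to the corresponding hook, with the explicit constant $11$ obtained from the case analysis.

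For the union statement, I would combine the same $A$-decomposition with \cref{prop:uqn-mad}. Each anti-diagonal $D_c$ corresponds in $K_{n+1}$ to edges of a single length $n+2-c$, whose endpoints decompose the vertex set into vertex-disjoint paths; hence $D_c$ itself is a single union queue, yielding $k$ union queues for $A$. For $B$, I would reduce to a residual subgraph of $K_{n+1}$ with maximum average degree at most $40$ (absorbing ``long'' edges within $B$ into the matching structure of the $A$-union-queues) and then invoke \cref{prop:uqn-mad} to obtain at most $42$ further union queues, reaching $k + 42$ in total.

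The main obstacle in both parts is to execute the covering of $B$ while respecting the hook structure of $T_n$ and not merely that of the smaller triangle $B \cong T_{m-1}$, since hooks of $T_n$ extend across the boundary $x + y = m+1$. For the local part, the tight budget of only $11$ extra chains per hook forces a very structured cover of $B$ and a delicate case analysis across the three types of restrictions described above. For the union part, the difficulty lies in arranging the reduction so that the residual graph really has $\mad \leq 40$ after absorbing edges into the $A$-union-queues; the precise constants $11$ and $42$ in the statement reflect the minimal slack required by these case analyses.
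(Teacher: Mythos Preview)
Your plan for the local statement has a fatal structural flaw. You cover the outer strip $A$ by the $k$ anti-diagonals $D_{m+1},\ldots,D_{n+1}$, each of which (as you note, modulo the sign slip ``$m \geq (n+1)/\sqrt{2}$'') touches every hook of $T_n$; hence these $k$ chains already consume the full budget $k$ at every hook, and the cover of $B=T_{m-1}$ must contribute at most $11$ chains to every $T_n$-hook. But every column $j\in[1,m-1]$ of $B$ lies in the hook $H_j$, and every row $r\in[1,m-1]$ lies in the hook $H_{n+2-r}$, so your requirement forces \emph{every} row and \emph{every} column of $T_{m-1}$ to meet at most $11$ of the $B$-chains. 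Via the correspondence $T_{m-1}\leftrightarrow K_m$ this would yield a queue layout of $K_m$ in which every vertex lies in at most $22$ queues, i.e.\ $\lqn(K_m)\le 22$, contradicting \cref{lem:queue-LB} as soon as $m\approx n/\sqrt 2$ exceeds a fixed constant. No choice of ``partial columns, partial rows, and L-shaped antichains'' can circumvent this counting obstruction. The union part fails for the analogous reason: the residual $B$ corresponds exactly to the edges of $K_{n+1}$ of length greater than $k$, a graph of minimum degree about $n-2k\approx 0.41\,n$, so its maximum average degree is linear in $n$ and certainly not at most $40$; and nothing vertex-disjoint can be absorbed into the anti-diagonal union queues, since for each length $\ell\le k$ the length-$\ell$ edges already touch every vertex.

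The paper avoids this by \emph{not} spending the budget uniformly. Its $k$ principal chains $L_1,\ldots,L_k$ are L-shaped staircases that together cover $T_n\setminus T_{n-2k}$, but hook $H_j$ meets only about $\min\{j,\,n+1-j,\,k\}$ of them. The uncovered region is thus the much smaller triangle $T_{n-2k}$ (not $T_{n-k}$), and it is filled by seven families $\calA,\ldots,\calG$ of short vertical and horizontal segments (stars in $K_{n+1}$) arranged so that hook $H_j$ meets roughly $k-\min\{j,\,n+1-j,\,k\}$ of them; the two contributions add up to about $k$ at every hook. For the union bound the paper packs these segments together with the $L_i$ into $k+6$ groups via a greedy interval coloring, excises a bounded-out-degree set of ``bad points'' where a group fails to be a union chain, and invokes \cref{prop:uqn-mad} only on this sparse residual to obtain $34$ additional union chains.
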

%
%
\begin{proof}
    First, we define weakly monotonically decreasing chains that cover $ T_n $ such that no hook intersects more than $ k + 11 $ chains.
    We then partition these chains into sets of chains that form the basis for our union chains.
    We assume that $ n $ is even and that $ k $ is the smallest even integer with $k \geq (1-1/\sqrt{2})(n + 1)$.
    To compensate for this assumption, we construct chains such that each hook intersects at most $ k + 9 $ chains and a partition of $ T_n $ into $ k + 40 $ union chains.
    We need $ 3n \geq 10k $ for the construction, which is the case for $ n \geq 294 $.
    For $ n \leq 56 $, we have $ \lfloor (n + 1)/2 \rfloor \leq \lceil 1-1/\sqrt{2} \rceil (n + 1) + 11 $, so an $ \lfloor (n + 1)/2 \rfloor $-queue layout of $ K_{n + 1} $ gives the desired partition of $ K_{n + 1} $, respectively $ T_n $, into queues, respectively chains.
    For any $ n $ between $ 56 $ and $ 294 $, one can check that the desired bounds can be obtained from a queue layout of some $ K_{n'} $ with a slightly smaller or greater number of vertices.%
    \footnote{%
        Indeed, for each $ n $ between $ 56 $ and $ 294 $ and corresponding $k$, one of the following two cases applies.
        First, there is an even $ n' \geq n $ such that $ k' = \lceil (1-1/\sqrt{2})(n'+1) \rceil $ is even, $ 3 n' \geq 10 k' $ holds, and we have 
        $ \lqn(K_n) \leq \lqn(K_{n'}) \leq k' + 9 \leq k + 11 $, respectively 
        $ \uqn(K_n) \leq \uqn(K_{n'}) \leq k' + 40 \leq k + 42 $.
        Or second, there is an even $ n' $ such that 
        $ n - 4 \leq n' \leq n $, 
        $ k' = \lceil (1-1/\sqrt{2})(n'+1) \rceil $ is even, 
        $ 3 n' \geq 10 k' $ holds, and 
        the queue layout we obtain for $ K_{n'} $ can be augmented to a queue layout of $ K_n $ matching the desired bounds.
        To do so, we attach at most two of the additional $ n' - n \leq 4 $ vertices to the left and at most two to the right and use two additional queues to cover the new edges.
        We then observe that $ k' + 2 \leq k $, which gives the desired bounds for $ K_n $.
    }

    We start by defining a family $ \mathcal{L} $ of $k$ chains $L_1,\ldots,L_k$, illustrated in \cref{fig:diagonal-chains}.
    Chain $L_i$ is composed of three blocks. The first block consists of
    the $2(k-i+1)$ topmost elements in column~$i$ of $T_n$.
    The second block starts at the lowest element of the first block,
    continues with a right and down alternation for $2(n-2(k-1))$
    steps, and ends in row $ i $.
    The last block consists of the
    $2(k-i+1)$ rightmost elements in row~$ i $.  
    Formally, for $ i = 1,\ldots,k $ we set
    \begin{align*} 
        L_i = &\, \{ (i, y) \in T_n \mid n+1-i \geq y \geq n-2k+i \} \\
        \cup &\,\{ (x, y) \in T_n \mid x, y \geq i \text{ and } n - 2(k - i) \leq x + y \leq n - 2 (k - i) + 1 \} \\
        \cup&\,\{ (x, i) \in T_n \mid n-2k + i \leq x \leq n+1 -i\}.
    \end{align*} 
    The chains of $ \mathcal{L} $ cover all points of $ T_n $ except for the bottom left triangle $ T_{n - 2k} $.
    The remaining points are covered by chains containing only points of a single column or row.
    We refer to these chains as \emph{vertical} and \emph{horizontal} chains, respectively.
    Note that vertical and horizontal chains correspond to stars in $ K_{n + 1} $.
    We only define families $ \mathcal{A}, \dots, \mathcal{G} $ containing vertical chains, the horizontal chains $ \mathcal{A'}, \dots, \mathcal{G'} $ are then defined symmetrically, i.e., $ \mathcal{M}' = \{ (y, x) \in T_n \mid (x, y) \in \mathcal{M} \} $ for $ \mathcal{M} = \mathcal{A}, \dots, \mathcal{G} $.
    The resulting layout of $ T_{n - 2k} $ is illustrated in \cref{fig:small-triangle-local}.
    
    \begin{figure}[tbp]
        \centering
        \includegraphics[width=\textwidth]{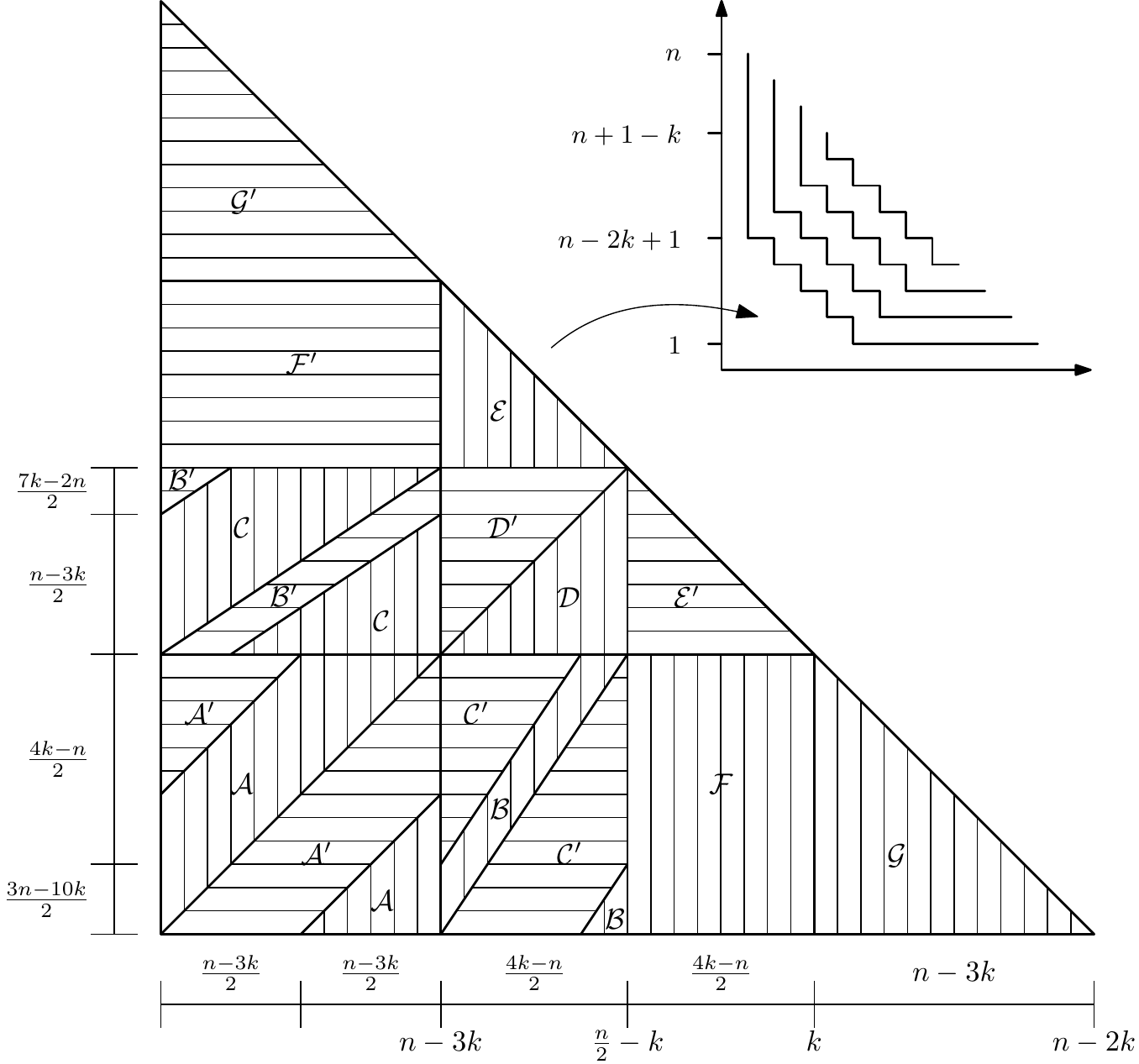}
        \caption{
            Chains $ L_1, \dots, L_k $ (top right). The triangle $ T_{n - 2k} $ is covered by families $ \mathcal{A}, \dots, \mathcal{G} $ and $ \mathcal{A'}, \dots, \mathcal{G'} $ of vertical, respectively horizontal, chains (bottom left).}
        \label{fig:small-triangle-local}
        \label{fig:diagonal-chains}
    \end{figure}

    The families $ \mathcal{A}, \dots, \mathcal{D} $ cover the bottom left square $ S_{n/2 - k} = \{ (x, y) \in T_n \mid 1 \leq x, y \leq n/2 - k \} $.
    For this, let $ \mathcal{A} $ consist of chains $ A_1, \dots, A_{n - 3k} $ and $ \hat{A}_1, \dots, \hat{A}_{n - 3k} $ with $ A_i \cup \hat{A}_i $ having size $ (n - 3k)/2 $ or $ (n - 3k)/2 + 1 $ for $ i = 1, \dots, n - 3k $.
    The chains $ A_i $ consist of points in column $ i $ starting with the bottommost point in row $ i $.
    Chains $ A_i $ with points above $ y = n - 3k $ continue as $ \hat{A}_i $ from the bottom.
    Note that $ \hat{A}_i $ is empty for $ i \leq (n - 3k)/2 $.
    For $ i = 1, \dots, n - 3k $ we define
    \begin{align*}
        A_i        & = \{ (i, y) \in T_n \mid i \leq y < (n - 3k)/2 + i \text{ and } y \leq n - 3k\} \text{ and} \\
        \hat{A}_i  & = \{ (i, y) \in T_n \mid 1 \leq y \leq i - (n - 3k)/2 \}.
    \end{align*}
    The defined chains together with the chains in $ \mathcal{A}' $ cover all points whose $ x $- and $ y $-coordinates are at most $ n - 3k $.
    Some points, however, are covered twice. 
    We may choose any of the two covering chains for the respective points to obtain a partition.

    Family $ \mathcal{B} $ is located to the right of $ \mathcal{A} $ and consists of chains $ B_1, \dots, B_{(4k - n)/2} $ and $ \hat{B}_1, \dots, \hat{B}_{(4k - n)/2} $.
    Above $ \mathcal{A} $, there are chains $ C_1, \dots, C_{n - 3k} $ and $ \hat{C}_1, \dots,\allowbreak \hat{C}_{n - 3k} $ forming family $ \mathcal{C} $.
    Considering the $ y $-coordinates of the bottommost points of the chains $ B_i $ in $ \mathcal{B} $, we have a slope $ s = \frac{n - 3k}{(4k - n)/2} $ ($ = \sqrt{2} $ for $ k = (1 - 1/\sqrt{2}) n $).
    Symmetrically, the slope is $ 1/s $ in $ \mathcal{C} $.
    For $ i = 1, \dots, (4k - n)/2 $ and $ j = 1, \dots, n - 3k $ we define
    \begin{align*}
        B_i        & = \{ (n - 3k + i, y) \in T_n \mid \lfloor si \rfloor \leq y \leq \lceil si \rceil + (3n - 10k)/2 \text{ and } y \leq n - 3k \}, \\
        \hat{B}_i  & = \{ (n - 3k + i, y) \in T_n \mid 1 \leq y \leq \lceil si \rceil + (3n - 10k)/2 - (n - 3k) \}, \\
        C_j        & = \{ (j, y) \in T_n \mid n - 3k + \lfloor j/s \rfloor \leq y \leq 3(n - 3k)/2 + \lceil j/s \rceil \text{ and } \\
                        & \qquad n - 3k < y \leq n - 2k \}, \text{ and} \\
        \hat{C}_j  & = \{ (j, y) \in T_n \mid n - 3k < y \leq 3(n - 3k)/2 + \lceil j/s \rceil - (n - 2k) \}.
    \end{align*}
    Note that $ \hat{B}_i $ and $ \hat{C}_j $ are empty for $ i \leq (n - 3k)/2 $ and $ j \leq (3n - 10k)/2 $.
    The chains in $ \mathcal{B} \cup \mathcal{C'} $, and symmetrically $ \mathcal{B'} \cup \mathcal{C} $, form a rectangle containing $ (n - 3k)(4k - n)/2 $ points.
    Again, we choose any chain for points that are covered by multiple chains.

    Family $ \mathcal{D} $, together with the corresponding horizontal chains in $ \mathcal{D'} $, accomplishes the square $ S_{n/2 - k} $.
    For this, we have $ (4k - n)/2 $ chains $ D_1, \dots, D_{(4k - n)/2} $, where $ D_i $ has size $ i $.
    For $ i = 1, \dots, (4k - n)/2 $ let
    \begin{align*}
        D_i & = \{ (n - 3k + i, y) \in T_n \mid n - 3k < y \leq n - 3k + i \}.\\
    \end{align*}
    Finally, we cover to remaining two triangles with three families $ \mathcal{E}, \mathcal{F} $, and $ \mathcal{G} $ of vertical chains.
    Families $ \mathcal{E} $ and $ \mathcal{G} $ consist of chains $ E_1, \dots, E_{(4k - n)/2} $, respectively $ G_1, \dots, G_{n - 3k} $, each filling a triangle,
    whereas the chains $ F_1, \dots, F_{(4k - n)/2} \in \mathcal{F} $ all have size $ n - 3k $ and cover a rectangle.
    For $ i = 1, \dots, (4k - n)/2 $ and $ j = 1, \dots, n - 3k $ we define

    \begin{align*}
        E_i & = \{ (n - 3k + i, y) \in T_n \mid n/2 - k < y \leq k - (i - 1) \}, \\
        F_i & = \{ (n/2 - k + i, y) \in T_n \mid 1 \leq y \leq n - 3k \}, \text{ and} \\
        G_j & = \{ (k + j, y) \in T_n \mid 1 \leq y \leq n - 3k - (j - 1) \}.
    \end{align*}
    The chains in $ \mathcal{E}' $, $ \mathcal{F} $, and $ \mathcal{G} $ together cover the all points $ (x, y) $ of $ T_{n - 2k} $ with $ x > n - 3k $, and symmetrically $ \mathcal{E} $, $ \mathcal{F'} $, and $ \mathcal{G'} $ cover the triangle above $ S_{n/2 - k} $.
    Note that we have $ n/4 \leq 2n/7 \leq (1 - 1/\sqrt{2})(n + 1) \leq k $ and recall that $ n \geq 10k/3 \geq 3k $ so all intervals are well-defined.

    Next we show that each hook intersects at most $ k + 9 $ chains.
    Recall that a hook of a vertex is the union of the row and the column representing that vertex.
    We first count for each row $ y = 1, \dots, n $ and each column $ x = 1, \dots, n $ the number of intersecting chains and then add up the results to obtain the number of intersecting chains for each hook.
    We start by counting the vertical chains, which intersect rows $ 1, \dots, k $.
    Note that no vertical chain intersects any row above $ y = k $.
    If $ n = 3k $, then $ \mathcal{B} $ and $ \mathcal{C} $ are empty, so we may assume $ s > 0 $ when counting these chains.
    \begin{itemize}
        \item [$ \mathcal{A} $:]
            For each $ y = 1, \dots, (n - 3k)/2 $, row $ y $ intersects chains $ A_1, \dots, A_y $ and $ \hat{A}_{(n - 3k)/2 + y}, \dots,\allowbreak \hat{A}_{n - 3k} $, which sums up to $ (n - 3k)/2 + 1 $ chains.
            The chains $ \hat{A}_1, \dots, \hat{A}_{n - 3k} $ do not contain any points above row $ (n - 3k)/2 $ and thus row $ y = (n - 3k)/2 + 1, \dots, n - 3k $ intersects exactly $ A_{y - (n - 3k)/2 + 1}, \dots A_{y} $, i.e., $ (n - 3k)/2 $ chains.
        \item [$ \mathcal{B} $:]
            For the upcoming calculation, note that $ (4k - n)^2 \geq 2 (n - 3k)^2 $ for $ k \geq (1 - 1/\sqrt{2}) n $.
            For row $ y = 1, \dots, (3n - 10k)/2 $, we have chains 
            $ B_i $ for $ i \leq (y + 1)/s $ and
            $ \hat{B}_i $ for $ (4k - n)/2 \geq i > \frac{y - 1 - (n - 3k) - (3n - 10k)/2}{s} = \frac{y - 1 + (4k - n)/2}{s} $.
            This sums up to 
            \begin{multline*} 
                \frac{y + 1}{s} + \frac{4k-n}{2} - \frac{y - 1 + (4k - n)/2}{s}
                = \frac{4k-n}{2} - \frac{(4k - n)/2}{s} + \frac{2}{s} \\
                = \frac{(4k - n)}{2} - \frac{(4k - n)^2}{2 (n - 3k)^2} \frac{n - 3k}{2} + \frac{2}{s}
                \leq \frac{(4k - n)}{2} - \frac{n - 3k}{2} + \frac{2}{s}
                \overset{(\ast)}{<} \frac{7k - 2n}{2} + 4 
            \end{multline*}
            chains.
            For $ y = (3n - 10k)/2 + 1, \dots, n - 3k $, each row intersects at most 
            \begin{multline*} 
                \frac{y + 1}{s} - \frac{y - (3n - 10k)/2 - 1}{s} 
                = \frac{3n - 10k}{2s} + \frac{2}{s} 
                = (n - 3k - \frac{4k - n}{2})/s + \frac{2}{s} \\
                = \frac{4k - n}{2} - \frac{(4k - n)^2}{2 (n - 3k)^2} \frac{n - 3k}{2} + \frac{2}{s}
                \leq \frac{4k - n}{2} -  \frac{n - 3k}{2} + \frac{2}{s}
                \overset{(\ast)}{<} \frac{7k - 2n}{2} + 4
            \end{multline*}
            chains.
            For $ (\ast) $, we remark that $ 2/s < 4 $ holds for all $ n \geq 104 $.
            For smaller $ n $, recall that there are only $ (4k - n)/2 $ columns containing vertical chains in $ \mathcal{B} $.
            That is, if $ (7k - 2n)/2 + 4 > (4k - n)/2 $, then $ (7k - 2n)/2 + 4 $ certainly upper-bounds the number of chains in $ \mathcal{B} $ that intersect any row.
            However, if $ n < 104 $ and $ (7k - 2n)/2 + 4 < (4k - n)/2 $, then we also have $ 2/s < 4 $.
        \item [$ \mathcal{C} $:]
            For $ y = 1, \dots, (n - 3k)/2 $, chain $ C_i $ intersects row $ n - 3k + y $ only if $ i \leq (y + 1)s $ and $ \hat{C}_i $ intersects row $ n - 3k + y $ only if $ n - 3k \geq i > (y - 1 - (7k - 2n)/2) $.
            So we have at most
            \begin{multline*}
                (y + 1)s + n - 3k - (y - 1 + (7k - 2n)/2)s \\
                = 2s + \frac{2 (n - 3k)^2}{(4k - n)^2} \frac{4k - n}{2} 
                \leq 2s + \frac{4k - n}{2} 
                \leq \frac{4k - n}{2} + 3 
            \end{multline*}
            chains.
            For $ y = (n - 3k)/2 + 1, \dots, (4k - n)/2 $, we have $ C_i $ in row $ n - 3k + y $ for $ (y - 1 - (n - 3k)/2)s < i < (y + 1)s $, which upper-bounds the number of chains by 
            \begin{equation*}
                ((n - 3k) + 2)s 
                = \frac{2 (n - 3k)^2}{(4k - n)^2} \frac{4k - n}{2} + 2s 
                \leq \frac{4k - n}{2} + 3.
            \end{equation*}
        \item [$ \mathcal{D} $:]
            The vertical chains in $ \mathcal{D} $ intersect the rows $ n - 3k + 1, \dots, n/2 - k $.
            Precisely, for $ y = 1, \dots, (4k - n)/2 $, row $ n - 3k + y $ intersects the chains $ D_y, \dots, D_{(4k - n)/2} $, i.e., $ (4k - n)/2 - (y - 1) $ chains.
        \item [$ \mathcal{E} $:]
            Similarly, the vertical chains in $ \mathcal{E} $ intersect the rows $ n/2 - k + 1, \dots, k $.
            For each $ y = 1, \dots, (4k - n)/2 $, row $ n/2 - k + y $ intersects the chains $ E_1, \dots, E_{(4k - n)/2 - (y - 1)} $, i.e., $ (4k - n)/2 - (y - 1) $ chains.
        \item [$ \mathcal{F} $:]
            All chains of $ \mathcal{F} $ intersect the rows $ 1, \dots, n - 3k $ exactly once, that is we have $ (4k - n)/2 $ additional vertical chains in these rows.
        \item [$ \mathcal{G} $:]
            Row $ y = 1, \dots, n - 3k $ intersects the chains $ G_1, \dots, G_{n - 3k - (y - 1)} $, i.e., $ n - 3k - (y - 1) $ chains.
    \end{itemize}
    Summing up the number of vertical chains intersecting the rows $ y = 1, \dots, n - 3k $, we get at most $ k + 5 - y $ chains from $ \mathcal{A}, \mathcal{B}, \mathcal{F}, $ and $ \mathcal{G} $.
    Additionally we have at most four horizontal chains in $ \mathcal{A}' $ and $ \mathcal{C}' $. 
    Rows $ y = n - 3k + 1, \dots, n/2 - k $ intersect vertical chains in $ \mathcal{C} $ and $ \mathcal{D} $, which sums up to $ k - y + 4 $ chains.
    There are at most four additional horizontal chains in $ \mathcal{B}', \mathcal{D}', $ and $ \mathcal{E}' $ intersecting these rows.
    Finally, rows $ y = n/2 - k + 1, \dots, k $ intersect only vertical chains in $ \mathcal{E} $ and horizontal chains in $ \mathcal{F}' $, summing up to $ k - y + 2 $ chains.
    Together, we have at most $ k - y + 9 $ vertical and horizontal chains in row $ y = 1, \dots, k $ and only one horizontal chain from $ \mathcal{G}' $ in row $ k + 1, \dots, n - 2k $.
    Symmetrically, there are at most $ k - x + 9 $ vertical and horizontal chains intersecting column $ x = 1, \dots, k $ and one vertical chain in each column $ k + 1, \dots, n - 2k $.

    Now, we count the number of chains intersecting the hook $ H_x $ of vertex $ v_x $ for $ x = 1, \dots, n + 1 $.
    Recall that hook $ H_x $ corresponds to column $ x $ and row $ n + 2 - x $.
    For $ x = 1, \dots, k $, hook $ H_x $ intersects $ L_1, \dots, L_x $ and $ k - x + 9 $ vertical and horizontal chains.
    Note that row $ n + 2 - x $ is above row $ n - 2k $ and thus does not intersect any vertical or horizontal chains in $ T_{n - 2k} $.
    Hooks $ H_{k + 1}, \dots, H_{n/2 + 1} $ intersect all $ k $ chains in $ \mathcal{L} $ and at most one vertical chain from $ \mathcal{G} $.
    By symmetry all hooks intersect at most $ k + 9 $ chains.

    The chains defined above induce a $ (k + 9) $-local queue layout.
    To obtain a $ (k + 40) $-union queue layout, we partition the set of all chains into sets of chains $ \mathcal{S}_1, \dots, \mathcal{S}_{k + 6} $ with $ L_i \in \mathcal{S}_i $ for $ i = 1, \dots, k $.

    We first introduce some notions that allow us to transform a set of chains into a union chain.
    Consider some set $ \mathcal{S} = \{ S_1, \dots, S_m \} $ of chains.
    Note that $ \bigcup_{i \in [m]} S_i $ is not necessarily a union chain as two chains may intersect the same hook.
    We call the set of all hooks that intersect at least two chains of $ \mathcal{S} $ the \emph{common hooks} of $ \mathcal{S} $.
    If $ \mathcal{S} $ has no common hooks, then $ \mathcal{S} $ is already a union chain.
    Otherwise, we assign each of the common hooks to at most one chain.
    A point that is contained in some chain $ S \in \mathcal{S} $ and in some common hook that is not assigned to $ S $ is called a \emph{bad point}.
    Removing all bad points yields chains that together form a union chain.

    We now aim to define $ \mathcal{S}_1, \dots, \mathcal{S}_{k + 6} $ such that the resulting bad points can be covered by a constant number of union chains.
    For this, we associate each vertical (horizontal) chain $ C $ with the interval $ I_C \subseteq [k] $ that consists of the $ y $-coordinates ($ x $-coordinates) of the points contained in $ C $.
    We say two vertical (horizontal) chains \emph{overlap} if the corresponding intervals are not disjoint.
    Consider the set $ \mathcal{S}_i $ for some $ i = 1, \dots, k + 6 $.
    We add vertical and horizontal chains to $ \mathcal{S}_i $ such that 
    \begin{enumerate}[(i)]
    \item \label[cond]{cond:overlap} 
        chains in $ \mathcal{S}_i $ do not overlap,
    \item \label[cond]{cond:left-bottom-chains}
        the $ y $-coordinates ($ x $-coordinates) of all points in vertical (horizontal) chains in $ \mathcal{S}_i $ are smaller than $ i $, and
    \item \label[cond]{cond:L_i-conflict}
        there is no vertical (horizontal) chain in $ \mathcal{S}_i $ in column (row) $ i $.
    \end{enumerate}
    We first assume that \cref{cond:overlap,cond:left-bottom-chains,cond:L_i-conflict} hold and show that they can indeed be satisfied at the end of the proof. 
    We merge vertical (horizontal) chains of $ \mathcal{S}_i $ that are in the same column (row) into a single chain.
    Next, we assign common hooks to chains and use the three conditions to show that $ 34 $ union chains suffice to cover all bad points.
    For the analysis of bad points, we concentrate on vertical chains. 
    The result for horizontal chains follows symmetrically.

    \begin{figure}
        \centering
        \includegraphics{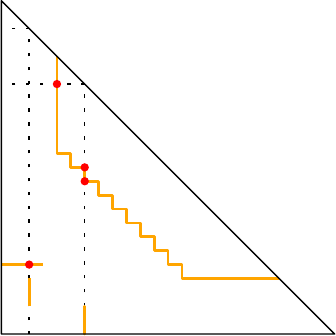}
        \caption{A set $ \mathcal{S}_i $ with two vertical chains and their hooks (dashed). Bad points are marked red.}
        \label{fig:bad-points}
    \end{figure}

    Consider a vertical chain $ C \in \mathcal{S}_i $ in column $ x $ and let $ H_C $ denote the hook that contains $ C $.
    Recall that vertical chains of $ \mathcal{S}_i $ that are in the same column are merged into a single chain.
    If $ H_C $ is a common hook, we assign it to $ C $. 
    Note that each hook either contains vertical chains or horizontal chains, and thus no hook is assigned to multiple chains.
    See \cref{fig:bad-points} for an illustration of bad points in $ \mathcal{S}_i $.
    Assigning $ H_C $ to $ C $ implies that all points of $ H_C $ that are contained in some other chain of $ \mathcal{S}_i $ are bad points.
    We say that these bad points are \emph{caused by $ C $}.
    We now analyze how many bad points are caused by $ C $.
    If $ x > i $, then $ H_C $ intersects $ L_i $ in at most four points, i.e., we have at most four bad points in $ H_C \cap L_i $.
    In this case, no horizontal chain in $ \mathcal{S}_i $ intersects $ H_C $ by \cref{cond:left-bottom-chains}.
    Otherwise we have $ x < i $ (due to \cref{cond:L_i-conflict}), and \cref{cond:overlap} ensures that $ H_C $ intersects at most one horizontal chain $ C' \in \mathcal{S}_i $.
    We thus have at most one bad point in $ H_C \cap C' $.
    In either case, $ C $ causes at most four bad points.
    
    It is left to show that each bad point is caused by some vertical or horizontal chain.
    For this, we keep $ \mathcal{S}_i $ fixed and consider a hook $ H $ whose column $ x $ intersects the triangle $ T_{n - 2k} $, i.e., with $ x \leq n - 2k $.
    We show that each bad point in $ H $ is caused by a vertical chain.
    The result for hooks whose row intersect $ T_{n - 2k} $ follows symmetrically with horizontal chains.
    Hooks that do not intersect $ T_{n - 2k} $ contain no bad points since $ L_i $ is the only chain in $ \mathcal{S}_i $ that intersects such a hook.
    Note that the row of $ H $ is above $ T_{n - 2k} $ and thus does not intersect any vertical or horizontal chains.
    If $ x < i $, then $ H $ does not intersect $ L_i $.
    It intersects at most one horizontal chain due to \cref{cond:overlap}.
    That is, if $ H $ contains a bad point, then it also contains a vertical chain causing this bad point.
    If $ x = i $, then $ H $ intersects neither vertical nor horizontal chains by \cref{cond:left-bottom-chains,cond:L_i-conflict}.
    Thus, $ L_i $ is the only intersected chain in $ \mathcal{S}_i $ and there are no bad points.
    If $ x > i $, then $ H $ intersects no horizontal chains in $ \mathcal{S}_i $ by \cref{cond:left-bottom-chains}.
    Hence, if there are bad points, then they are caused by a vertical chain in column $ x $.

    We are now ready to cover the bad points by a constant number of union chains.
    Let $ G \subseteq K_{n + 1} $ denote the graph that is induced by all bad points.
    For a vertical or horizontal chain $ C \subseteq T_{n - 2k} $, let $ v_C \in V(K_{n + 1}) $ denote the vertex that is represented by hook $ H_C $.
    We orient the edges of $ G $ such that every edge whose corresponding bad point is caused by chain $ C $ is oriented away from $ v_C $.
    Each hook contains at most four vertical or horizontal chains (see \cref{fig:small-triangle-local} and recall that each hook either contains vertical chains or horizontal chains).
    Recall that
    each chain causes at most four bad points.
    Thus, the out-degree of every vertex of $ G $ is at most $ 16 $.
    By \cref{prop:uqn-mad}, the graph $ G $ can be covered with $ \mad(G) + 2 \leq 2 \cdot 16 + 2 = 34 $ union queues using an arbitrary vertex ordering.
    Hence, the bad points can be covered by $ 34 $ union chains.

    Finally, we show how to partition the vertical chains of the presented $ (k + 9) $-local layout such that \cref{cond:overlap,cond:left-bottom-chains,cond:L_i-conflict} are satisfied.
    Let $ H $ denote the interval graph that is given by the intervals that correspond to vertical chains, 
    i.e., $ V(H) = \{ I_C \subseteq [k] \mid C \in \mathcal{A} \cup \dots \cup \mathcal{G} \} $ and there is an edge between two vertices if and only if the intervals are not disjoint.
    A clique of $ m $ vertices in $ H $ corresponds to a row that intersects $ m $ vertical chains.
    Recall that every row $ y = 1, \dots, k $ intersects at most $ k - y + 5 $ vertical chains.
    In particular, the clique number of $ H $ is at most $ k + 4 $.
    
    Note that any proper $ (k + 6) $-coloring and an arbitrary mapping between color classes and the sets of chains $ \mathcal{S}_1, \dots, \mathcal{S}_{k + 6} $ satisfies \cref{cond:overlap}.
    We next find a coloring with less than $ k + 6 $ colors that also satisfies \cref{cond:left-bottom-chains,cond:L_i-conflict}.
    We define an ordering on the vertices of $ H $ by decreasing topmost points of the intervals, i.e., $ [a, b] \prec [a', b'] $ if and only if $ b > b' $ or $ b = b' $ and $ a > a' $.
    We color the vertices of $ H $ greedily with $ k + 5 $ colors $ k + 6, \dots, 2 $.
    That is, for an interval in column $ x $, we choose the largest color that is not used by any smaller neighbor and that does not equal $ x $.
    We then define $ \mathcal{S}_i $ to contain the vertical chains whose intervals have color $ i $.
    Since $ H $ is an interval graph, the set consisting of a vertex $ [a, b] \in V(H) $ and its smaller neighbors induces a clique that corresponds to row $ b $.
    The vertex $ [a, b] $ thus has at most $ k - b + 4 $ smaller neighbors.
    There are at least two colors left that are larger than $ b $ and that are not already used by a smaller neighbor.
    Choosing one that does not equal $ x $ satisfies \cref{cond:left-bottom-chains,cond:L_i-conflict}.

    By symmetry, we color the horizontal chains with colors $ k + 6, \dots, 2 $ such that \cref{cond:overlap,cond:left-bottom-chains,cond:L_i-conflict} hold.
    We now have a partition of all chains into $ k + 6 $ sets of chains $ \mathcal{S}_1, \dots, \mathcal{S}_{k + 6} $, each forming a union chain when bad points are removed.
    Together with the $ 34 $ union chains for bad points, we get a partition of $ T_n $ into $ k + 6 + 34 = k + 40 $ union chains.
\end{proof}

\Cref{lem:uqn-UB} provides an upper bound of $ (1-1/\sqrt{2})n + \mathcal{O}(1) $ both for the union queue number and the local queue number and thus completes the proof of \cref{thm:queue-number}. 
For the local queue number, however, the following lemma improves on the constant of the additive term such that it almost matches the lower bound.

\begin{lemma}\label{lem:local-queue-UB}
  For any $n$, we have
  $\displaystyle\lqn(K_n) \leq \left\lceil(1-\frac{1}{\sqrt{2}})\,n \right\rceil + 1.$
\end{lemma}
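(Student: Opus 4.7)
The plan is to refine the construction from \cref{lem:uqn-UB-triangle} so that the additive overhead for the local variant drops from $11$ to $1$. Working again in the triangular point-set model, the goal becomes a partition of $T_{n-1}$ into weakly monotonically decreasing chains with at most $k+1$ chains meeting every hook, where $k = \lceil(1-1/\sqrt{2})\,n\rceil$.

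The overhead of $+11$ in \cref{lem:uqn-UB-triangle} came largely from the union constraint: the auxiliary families $\mathcal{A}, \dots, \mathcal{G}$ together with their symmetric counterparts were needed to partition $T_{n-2k}$ into \emph{vertex-disjoint} unions of chains, and this incurred both a color-class analysis and a constant number of extra chains for bad points. Dropping the union requirement, I would reuse the $k$ diagonal chains $L_1, \dots, L_k$ of \cref{lem:uqn-UB-triangle} exactly as before (each hook meets at most $k$ of them by the counting argument given there) and then cover the leftover small sub-triangle $T_{n-2k}$ by a single new family of monotone chains, designed so that every hook meets at most one of them. A natural candidate is a family of rotated zig-zags internal to $T_{n-2k}$ organized into diagonal bands, with each band owning one column and one row of $T_{n-2k}$, so that the column/row indices of the outer hooks that meet the band are in bijection with the band itself.

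The two verification tasks are then: (a) the inner chains together with the $L_i$ cover all of $T_{n-1}$; and (b) every hook meets at most $k+1$ chains in total — at most $k$ from the $L_i$ and at most one from the inner family. The main obstacle is step (b) on the boundary of $T_{n-2k}$, where the hooks already meet all $k$ outer chains and therefore cannot afford even a single extra overlap beyond the intended inner chain. Achieving this forces the inner construction to respect the outer one: where a boundary hook is in danger of being double-counted, I plan to extend the corresponding $L_i$ slightly into $T_{n-2k}$ to absorb the offending inner points into $L_i$, rather than introducing any new chain. Showing that such local surgery can be done consistently — with every $L_i$ remaining a valid monotone chain and every hook respecting the $+1$ budget — is the delicate part of the proof; the rest is a direct count once the construction is fixed, with parity and boundary effects tucked into the single additive $+1$.
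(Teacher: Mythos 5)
The central claim of your plan — that the residual triangle $T_{n-2k}$ can be covered by a family of monotone chains so that every hook meets at most one of them — is impossible, and no amount of ``local surgery'' can rescue it.
To see this, note that the hooks of $T_n$ that touch $T_{m}$ (with $m=n-2k$) touch it in exactly one column or exactly one row of $T_m$, so your requirement is equivalent to: every column of $T_m$ meets at most one chain and every row of $T_m$ meets at most one chain.
Write $c(x)$ for the unique chain meeting column $x$ and $r(y)$ for the unique chain meeting row $y$. Since every point of $T_m$ is covered, every $(x,y)\in T_m$ forces $c(x)=r(y)$. From $(1,1)$ and $(1,2)$ we get $r(1)=c(1)=r(2)$, from $(2,1)$ we get $c(2)=r(1)$, and continuing in this way one shows $c(1)=\dots=c(m)=r(1)=\dots=r(m)$: all of $T_m$ would have to lie in a \emph{single} weakly decreasing chain, which is false as soon as $m\geq 2$.
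Concretely, the nested ``elbows''/``bands'' you describe do not have the property you need: column $p$ of $T_m$ intersects roughly $\min(p,m{+}1{-}p)$ of the nested elbows, not just one, so hooks near the middle of $T_{n-2k}$ would pick up $\Theta(n)$ extra chains, wrecking the $k{+}1$ budget.

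Moreover, the shortfall is structural, not a boundary issue, so extending a few of the $L_i$ into $T_{n-2k}$ cannot fix it; the inner triangle genuinely needs on the order of $(1-1/\sqrt{2})(n-2k)$ local chains by the lower bound (\cref{lem:queue-LB}), not one chain per hook.
The paper's argument avoids the trap by a genuine recursion: it carves off the diagonal family $\calA$ (your $L_i$) \emph{plus} two additional families $\calB$ and $\calC$ of elbow-like chains near the corner of $T_{n-2k}$, leaving a still-smaller triangle $T_{n'}$ with $n'=4k-n$, and invokes the same lemma inductively with a strictly smaller parameter $k'\leq 7k-2n-1$. The extra families $\calB,\calC$ and the arithmetic $k'\leq 7k-2n-1$ are exactly what let the per-hook budget of $k{+}1$ close: in the first case of the hook count one needs $i+(n-3k-i+1)+(n-3k)+(k'+1)\leq k+1$, which is where the recursion, not a one-chain-per-hook cover, does the work. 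Your proposal is missing this entire mechanism.
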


For our example of a $(k+1)$-local queue assignment $\calQ$ of $K_{n+1}$, we again
use the equivalent model of covering the triangular point set $T_n$ with a set
$\calF$ of monotone chains such that no hook intersects more than $k+1$ of the
chains in~$\calF$.
 
\begin{lemma} For any $n \geq 0$ and any $k \geq (1-1/\sqrt{2})(n +1)$, the
elements of $T_n$ can be partitioned into a set $\calF_n$ of weakly monotonously
decreasing chains in such a way that
 \begin{enumerate}[(i)]
  \item every hook intersects at most $k+1$ chains in $\calF_n$,
  \item for each $i \leq k+1$, column $n+1-i$ intersects at most $i$ chains in
$\calF_n$,\label{enum:column}
  \item for each $i \leq k+1$, row $n+1-i$ intersects at most $i$ chains in
$\calF_n$.\label{enum:row}
 \end{enumerate}
\end{lemma}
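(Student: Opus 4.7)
The plan is to prove this lemma by induction on $n$. Before starting the induction, observe that conditions~(ii) and~(iii) are in fact automatic consequences of the point counts: column~$n+1-i$ of $T_n$ contains exactly $i$ points and row~$n+1-i$ contains exactly $i$ points, so any partition into chains already satisfies these bounds. Their inclusion in the statement is nevertheless essential: they strengthen the inductive hypothesis in a way that makes the extension argument in the induction step go through, so I carry them along explicitly.

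For the base case $n = 0$, the empty partition of $T_0 = \emptyset$ satisfies all three conditions vacuously. For the induction step, I apply the hypothesis to $T_{n-1}$ with the same value of $k$; this is legitimate because $1 - 1/\sqrt{2} < 1$ forces every integer $k$ with $k \geq (1-1/\sqrt{2})(n+1)$ to also satisfy $k \geq (1-1/\sqrt{2})n$. Starting from $\calF_{n-1}$, I would extend it to cover the new anti-diagonal $D = T_n \setminus T_{n-1} = \{(i, n+1-i) : 1 \leq i \leq n\}$, which is itself a weakly monotonically decreasing chain. The extension is performed point by point along $D$: for each new $(i, n+1-i)$, I attach it to an existing chain through its lower neighbor $(i, n-i)$ or its left neighbor $(i-1, n+1-i)$, choosing the option that preserves the weakly monotonically decreasing property; when neither works, a new chain is started on a maximal segment of $D$. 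Conditions~(ii) and~(iii) of $\calF_{n-1}$ ensure that only few chains reach the outer columns and rows of $T_{n-1}$ where $D$ attaches, keeping the number of new chains introduced small.

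The main obstacle will be verifying condition~(i) for $\calF_n$. The hook of $v_i$ in $T_n$ is column~$i$ together with row~$n+2-i$, while in $T_{n-1}$ it was column~$i$ together with row~$n+1-i$; this index shift means the hook bound from the inductive hypothesis does not transfer directly, and one must count the chains of $\calF_{n-1}$ meeting column~$i$ and those meeting row~$n+2-i$ separately. The latter is bounded by $i-1$ by applying condition~(iii) of the inductive hypothesis with $i' = i - 1$. The new hook gains at most two points from $D$, one in its column and one in its row, and the extension procedure is designed to absorb both into chains the hook already sees; the tightness of $k \geq (1-1/\sqrt{2})(n+1)$ leaves just enough room to accommodate the exceptional hooks where a new chain must be introduced. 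Once condition~(i) is established, conditions~(ii) and~(iii) for $\calF_n$ are immediate: each referenced column or row in $T_n$ gains exactly one new point from $D$ compared to its counterpart in $T_{n-1}$, so its chain count increases by at most one, matching the allowed bound.
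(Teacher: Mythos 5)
Your approach is genuinely different from the paper's. The paper peels off an explicit ``frame'' of $\Theta(n)$ points in one step --- the family $\calA$ (hooks of the outer vertices), plus families $\calB$ and $\calC$ --- and then recurses on a \emph{much smaller} triangle $T_{n'}$ with $n' = 4k-n$; the key arithmetic fact is that the recursive parameter $k'$ drops to at most $7k - 2n - 1$, which is what leaves enough slack for the hook counts to work out. You instead peel off a single anti-diagonal $D$ and recurse on $T_{n-1}$ with the \emph{same} $k$.

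Your version has a genuine gap, and it is exactly where you flag ``the main obstacle.'' The hook of $v_i$ in $T_n$, restricted to $T_{n-1}$, is column $i$ together with row $n+2-i$. This set is \emph{not} a hook of $T_{n-1}$: in $T_{n-1}$, column $i$ lies on the hook of $v_i$ (whose row component is $n+1-i$), while row $n+2-i$ lies on the hook of $v_{i-1}$. The inductive hypothesis therefore gives you two \emph{separate} constraints --- at most $k+1$ chains meeting column $i$, at most $k+1$ chains meeting row $n+2-i$ --- and nothing about how much those two families of chains overlap. Conditions (ii)/(iii) help you only for ``short'' rows/columns: condition~(iii) applied to $\calF_{n-1}$ bounds row $n+2-i = n - (i-2)$ by $i-2$ chains (note the off-by-one in your $i' = i-1$), but that still leaves column $i$ contributing up to $k+1$ independent chains, so in the range roughly $3 \leq i \leq k+1$ you are looking at up to $(k+1) + (i-2)$ chains through the $T_n$-hook of $v_i$ \emph{before} you even add $D$. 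Your closing remark that ``the tightness of $k \geq (1-1/\sqrt{2})(n+1)$ leaves just enough room'' is backwards: tightness of $k$ means there is essentially \emph{no} room, and the single-diagonal step with $k$ held fixed has no quantity that shrinks to create slack. The paper avoids this trap precisely by making the recursive subproblem much smaller ($n' = 4k-n$) so that the inner hook counts are far below $k+1$. To repair your proof you would need either a substantially strengthened inductive hypothesis that controls the combined chain count of column $i$ and the \emph{shifted} row $n+2-i$, or a different recursion that lowers $k$ --- at which point you are back to something close to the paper's argument.
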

\begin{proof}
  Note that conditions \ref{enum:column} and \ref{enum:row} are true for every
  partition of $T_n$ with chains, simply because there are only $i$ entries in
  the respective column and row.

  We proceed by induction on $n$.  For $n \leq 8$ the value of $k+1$
  is at least as large as the queue number of $K_{n+1}$, this yields
  an induction base.
 
 \medskip
 
 For the remainder assume that $n > 8$ and (without loss of generality) that
$k = \lceil (1-1/\sqrt{2})(n+1)\rceil$.  In particular, we have
\begin{equation}
  (1-1/\sqrt{2})(n+1) \leq k < (1-1/\sqrt{2})n + 1.
  \label{eq:queue-bounds-on-k}
\end{equation}
For $n \geq 6$ we therefore have $2k \leq n$.

The family $\calF_n$ will be composed of families $\calA$, $\calB$, $\calC$
and a family $\calF_{n'}$ for suitable $n'$ which is provided by induction.
Figure~\ref{fig:schema68} shows an example.

Family $\calA$ consists of $k$ chains $A_1,\ldots,A_k$. Chain $A_\alpha$
is composed of three blocks. The first block consists of
the $2(k-\alpha+1)$ topmost elements in column~$\alpha$ of
$T_n$, the second block starts at the lowest element of the first block
and continues with a right and down alternation with $2(n-2(k-1))$
steps, it ends in row $\alpha$, and the last block consists the of
$2(k-\alpha+1)$ rightmost elements in row $\alpha$.  Formally, for
$\alpha = 1,\ldots,k$ we set
\begin{align*} 
    A_\alpha = &\, \{ (\alpha, y) \in T_n \mid n+1-\alpha \geq y \geq n-2k+\alpha \} \\
    \cup &\,\{ (x, y) \in T_n \mid x, y \geq \alpha \text{ and } n - 2(k - \alpha) \leq x + y \leq n - 2 (k - \alpha) + 1 \} \\
    \cup&\,\{ (x, \alpha) \in T_n \mid n-2k + \alpha \leq x \leq n+1 -\alpha\}.
\end{align*}

\begin{figure}
    \includegraphics[width = \textwidth]{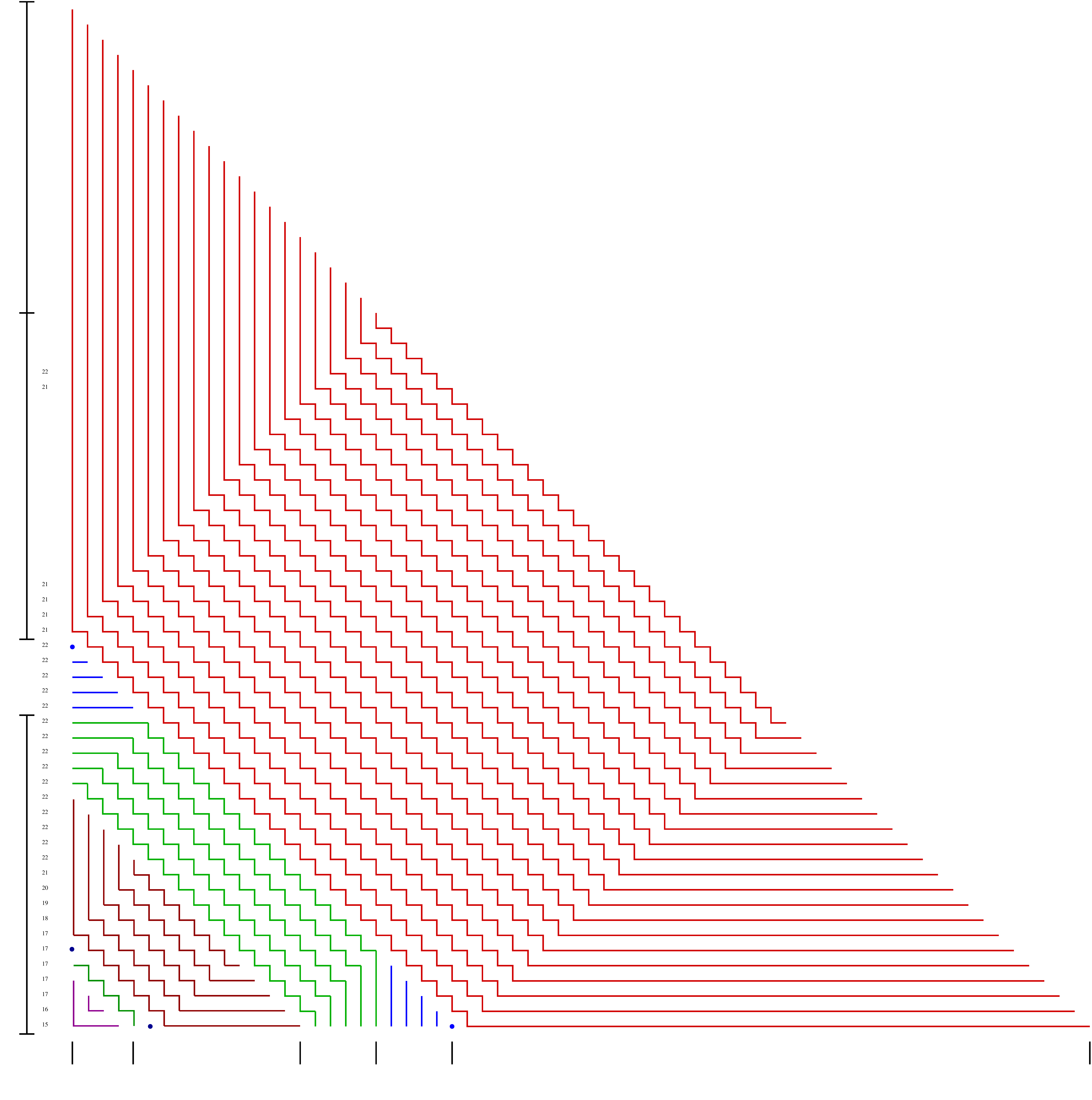}
    \caption{The table $T_{68}$ with the 22-local queue assignment obtained with $k=21$.
        Chains in the families $\calA$, $\calB$, and $\calC$
        are colored red, blue and green respectively.
        The chains given by a 6-local queue assignment of  $T_{16}$
        are darker, the $T_4$ of the second recursive call is magenta.}
    \label{fig:schema68}
\end{figure}

  For $n$ small ($n < 32$) it may be that $n < 3k$.  In this case the families
  $\calB$ and~$\calC$ are empty and $\calF_{n'}$ with $n' = n -2k$ can be
  chosen as a family of nested elbows, see Figure~\ref{fig:q13}. The
  corresponding queue assignment is even $k$-local. From now on we assume that
  $3k \leq n$.

\begin{figure}
    \centering
    \includegraphics[width = 10em]{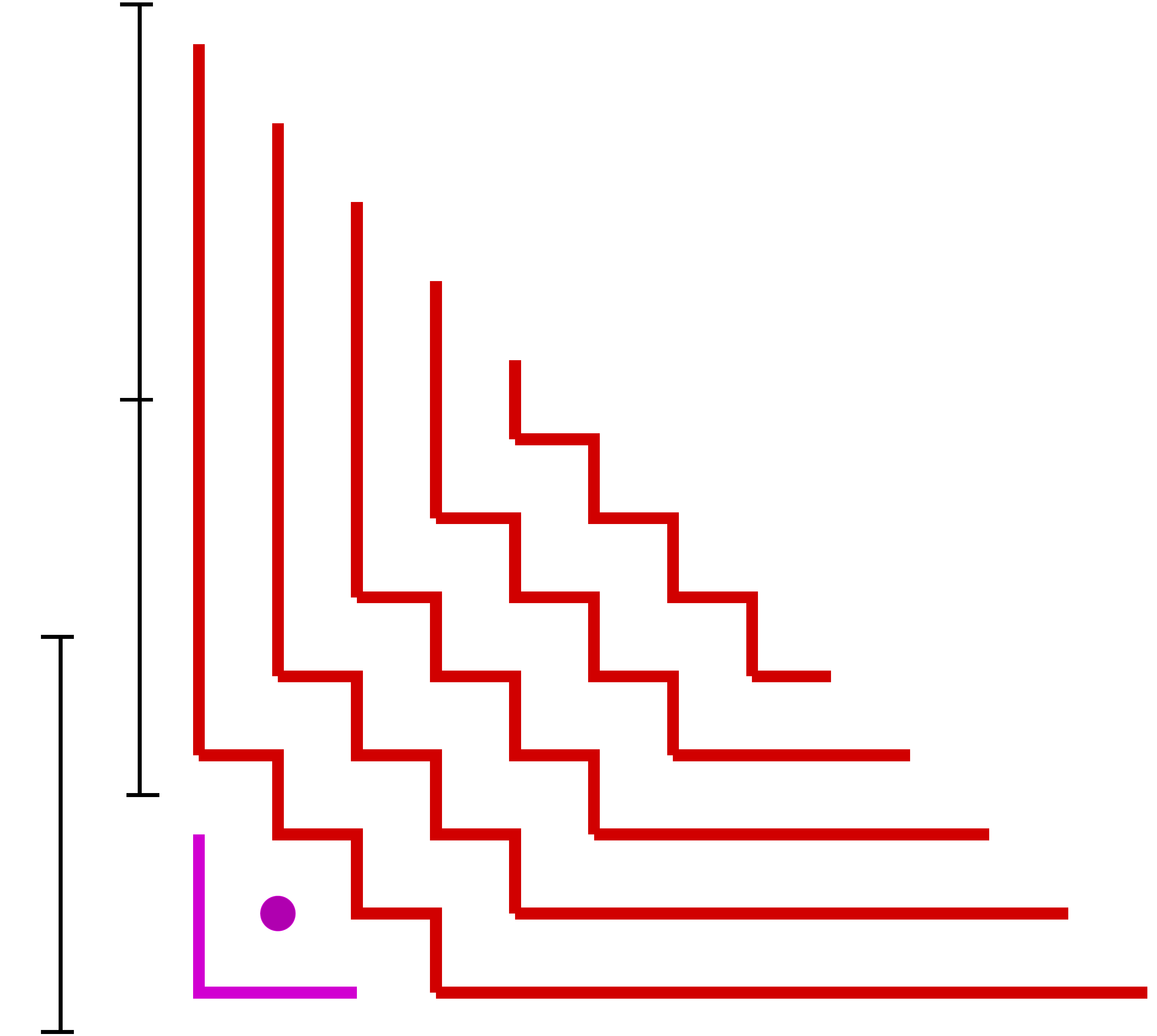}
    \caption{
        The table $T_{13}$ with the 6-local queue assignment for $K_{14}$ obtained
        with $k=5$. Since $n < 3k$ families $\calB$, and $\calC$ are empty. The
        queue assignment of $T_{3}$ in the lower corner consists of two nested elbows.}
    \label{fig:q13}
\end{figure}

Family $\calB$ consists of 
$n-3k$ chains $B_1,\ldots,B_{n-3k}$ where $B_\beta$ is composed of
two disjoint blocks. The first block contains the first $\beta$ elements
in row $n+1-2k-\beta$ and second block the $\beta$ first elements in column
$n-2k-\beta$.  Formally, for $\beta = 1,\ldots,n-3k$ we have
 \begin{align*} B_\beta = &\,\{ (\,x\,,n+1-2k-\beta) \in T_n \mid x \leq \beta \}\\
 \cup &\,\{ (n+1-2k-\beta,\,y\,) \in T_n \mid y \leq \beta \}.
 \end{align*}

 Family $\calC$ also consist of $n-3k$ chains $C_1,\ldots,C_{n-3k}$.  Chain
 $C_\gamma$ is composed of three blocks. The first block consists of the
 $n-3k+2-\gamma$ first elements in row $k+1-\gamma$, the second block starts
 with the last element of the first block and continues alternatingly down and
 right for $8k-2n-2$ steps until column $k+1-\gamma$, the last block consists
 of $n-3k+2-\gamma$ lowest elements in row $k+1-\gamma$.  Formally, for
 $\gamma = 1,\ldots,n-3k$ we set
 \begin{align*} 
    C_\gamma = &\,\{ (\,x\,,k+1-\gamma) \in T_n \mid  x \leq n-3k+2-\gamma \}\\ 
    \cup &\, \{ 
    \begin{aligned}[t]
        (\,x,y\,) \in T_n \mid 
        &\ y-x \leq 4k-n-1 \text{ and } x-y \leq 4k-n-1  \text{ and }\\
        &\ n-2k+2-2\gamma \leq x+y \leq  n-2k+3-2\gamma \}
    \end{aligned}\\
    \cup &\,\{ (k+1-\gamma,\,y\,) \in T_n \leq y \leq n-3k+2-\gamma\}.
 \end{align*}
 
 Observe that the subset of $T_n$ of elements not covered by any of
 the chains in $\calA \cup \calB \cup \calC$ is exactly $T_{n'}$ with
 \[
 n' = n-2k-2(n-3k) = 4k-n.
 \]
 We call induction on $T_{n'}$.
 We claim that $k' = \lceil (1-1/\sqrt{2})(n'+1)\rceil$
 is at most $7k-2n -1$, i.e., we have to show
 $(1-1/\sqrt{2})(n'+1) \leq 7k-2n -1$:
 \begin{align*}
   &       &  (1-1/\sqrt{2})(4k-n+1) &\leq 7k-2n -1 \\
   &\iff   &  (1+1/\sqrt{2})n + (2-1/\sqrt{2}) &\leq  (3+2\sqrt{2})k \\
   &\kern4pt\Longleftarrow &  (1+1/\sqrt{2})n + (2-1/\sqrt{2}) &\leq
                      (3+2\sqrt{2})(1-1/\sqrt{2})(n+1)\\
   &\iff   &  (1+1/\sqrt{2})n + (2-1/\sqrt{2}) &\leq (1+1/\sqrt{2})n + (1+1/\sqrt{2})\\
   &\iff   &  1 &\leq \sqrt{2}.
 \end{align*}
 So our family $\calF_n$ of weakly monotonously decreasing chains covering
$T_n$ is given by $\calF_n = \calA \cup \calB \cup \calC \cup \calF_{n'}$,
where $\calF_{n'}$ is the family obtained by induction on $T_{n'}$.
 
 It remains to show that every hook of $T_n$ is hit by at most $k + 1$ chains
of the family~$\calF_n$.  Indeed, for hook $H_i$ consisting of column $i$ and row $n+2-i$
we have the following.
 \begin{itemize}
  \item For $i \leq n-3k$ hook $H_i$ is hit by $A_1,\ldots,A_i$,
$B_i,\ldots,B_{n-3k}$, $C_1,\ldots,C_{n-3k}$, and at most $k'+1$ chains in
$\calF_{n'}$.  This makes in total no more than $i + (n-3k-(i-1)) + (n-3k) + (k'+1) \leq
2n-6k +1 + (7k-2n) = k+1$ chains, as desired.
  \item For $n-3k < i \leq n'$ hook $H_i$ is hit by $A_1,\ldots,A_i$,
$C_1,\ldots,C_{n-3k}$, and at most $n'+1-i$ chains in $\calF_{n'}$.  This
makes in total no more than $i + (n-3k) + (n'+1-i) = n-3k + 1 + n' =
n-3k + 1 +(4k-n) = k+1$ chains, as desired.
  \item For $n' < i \leq k$ hook $H_i$ is hit by $A_1,\ldots,A_i$ and
$C_1, \ldots,C_{k-i+1}$, which makes a total of $k+1$ chains.
  \item For $k < i \leq n/2$ hook $H_i$ is hit by $A_1,\ldots,A_k$ and at most
one chain in $\calB$, thus in total by at most $k+1$ chains.
  \item Finally, for $n/2 < i$ the argument is symmetric by reflecting $H_i$
  along the main diagonal.
\end{itemize}
This concludes the proof.
\end{proof}

\section{Local and Union Page Numbers}
\label{sec:page-number}

For book embeddings, it is convenient to think of the spine as being
circularly closed.  The placement of the vertices together with straight-line
edges yields a convex drawing of $K_n$. A page assignment is a partition of
the edges into non-crossing subsets, i.e., into outerplanar subdrawings of
this drawing of $K_n$.

First, we analyze the outerplanar subgraphs on each page of a book embedding and thereby show the lower bound of \cref{thm:page-number}.
The proof gives insight into how book embeddings for a matching upper bound on the local or union page number should look like.
This bound is also the best lower bound we obtain for the union page number.
We then prove upper bounds both on the local and union page number.

\begin{lemma}\label{lem:page-LB} 
   For any $n$ we have
  $\displaystyle \lpn(K_n) > \frac{1}{3}n - 1$.
\end{lemma}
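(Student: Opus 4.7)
The plan is to derive the bound from a careful counting argument built on the outerplanar structure of each page. Fix a $k$-local book embedding of $K_n$ with pages $\calP = \{P_1,\dots,P_m\}$; for each page let $s_i = |V(P_i)|$ be the number of vertices incident to an edge of $P_i$ and $e_i = |E(P_i)|$. The first step is to record that, viewed in the convex drawing, a page is a non-crossing set of chords, so its induced graph on $V(P_i)$ is outerplanar; therefore $e_i \leq 2s_i - 3$ whenever $s_i \geq 3$ (and $e_i \leq 1$ for $s_i = 2$). Summed over all pages this gives the edge-density inequality
\[
\binom{n}{2} \;=\; \sum_i e_i \;\leq\; 2\sum_i s_i - 3m,
\]
which, combined with the locality bound $\sum_i s_i = \sum_v p_v \leq kn$ (where $p_v$ counts pages containing $v$), already yields the trivial density bound $k \geq (n-1)/4$.

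To push this to $k > n/3 - 1$, I would add a coverage-type inequality. Since every edge $vw$ must be placed on some page containing both $v$ and $w$, the sets $V(P_i) \setminus \{v\}$ over all $P_i \ni v$ together cover every other vertex. Thus for each $v$ we have $\sum_{P_i \ni v}(s_i - 1) \geq n-1$, and summing over $v$ while switching the order of summation gives
\[
\sum_i s_i^2 \;\geq\; n(n-1) + \sum_i s_i.
\]

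The final step is to combine the edge-density inequality with this quadratic coverage inequality and the locality constraint. I would write $S = \sum_i s_i$ and manipulate the three inequalities (i) $\binom{n}{2} \leq 2S - 3m$, (ii) $S \leq kn$, and (iii) $\sum_i s_i^2 \geq n(n-1) + S$, applying Cauchy--Schwarz (or Jensen's inequality for the convex function $x\mapsto x^2$) to bound $\sum_i s_i^2$ from above in terms of $S$ and $m$. The extremal configuration consistent with all three constraints corresponds to pages of moderate size (roughly $2n/3$), at which the outerplanar bound and the coverage bound are simultaneously near-tight, and this forces $k > n/3 - 1$.

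The main obstacle is the last step: the edge-density inequality alone only produces a coefficient of $1/4$, so the improvement to $1/3$ must come from a quantitatively correct use of the quadratic coverage inequality. In particular, one must handle pages of very different sizes (small pages contribute little to coverage but still consume the $-3m$ term, while large pages can saturate the outerplanar bound but are expensive in terms of $\sum s_i$). A clean way to arrange the algebra is to assume $k \leq n/3 - 1$ and derive a contradiction by showing the three inequalities become incompatible; the convexity of $x \mapsto x^2$ pins down where the extremum must lie and produces the desired gap.
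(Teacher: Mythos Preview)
Your three inequalities (i)--(iii) are not strong enough to yield $k > n/3 - 1$; in fact they are simultaneously satisfiable with $k \approx n/4$. Take every page to use all $n$ vertices, so $s_i = n$ for each $i$. Then (iii) becomes $mn^2 \geq n(n-1) + mn$, i.e.\ $m \geq 1$, which is vacuous; only (i) remains active and gives $m \geq \binom{n}{2}/(2n-3) \approx n/4$, hence $k = S/n = m \approx n/4$. So your system of constraints is satisfied at the trivial density bound, and no amount of algebraic manipulation of (i)--(iii) alone can push past $n/4$. The specific step you describe---``applying Cauchy--Schwarz (or Jensen) to bound $\sum_i s_i^2$ from above in terms of $S$ and $m$''---goes the wrong way: convexity gives $\sum s_i^2 \geq S^2/m$, a \emph{lower} bound, and the only available upper bound $\sum s_i^2 \leq n\cdot S$ collapses (iii) to $S \geq n$.

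What the paper does that you are missing is a genuinely geometric ingredient: for each page $P$ it looks at the convex hull cycle $C_P$ and uses that the clockwise arc-lengths of its edges sum to exactly $n$. Classifying hull edges as \emph{black} (in $P$) or \emph{red} (not in $P$) and letting $b_v$ count black forward hull edges at $v$, the distinctness of these edges gives $\sum_{P\ni v}\len(\mathrm{fwd}_P(v)) \geq \sum_{\ell=1}^{b_v}\ell \geq b_v^2/2$. Summing and applying Cauchy--Schwarz to $(b_v)$ yields $|\calP| \geq k^2/2 - R/3$ (with $R$ the total red count), and combining this with the outerplanar inner-edge bound $|V_P|-3$ gives a quadratic in $k$ whose smaller root is $n/3 - 1$. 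The arc-length identity and the distinctness of black forward edges are the structural facts that replace your (iii) with something actually constraining; without them the argument stalls at $n/4$.
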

\begin{proof}
  Let $\calP$ be a page assignment of $K_n$ which minimizes the
  local page number. We assume that $\lpn(\calP)\leq n/3$,
  otherwise we are done.
  Let $k$ be the average number of vertex-page
  incidences over all vertices, i.e.,
  $k = \frac{1}{n}\sum_{P \in \calP} |V_P|$.  We shall show that
  $k > \frac{1}{3}n - 1$, which in particular proves that
  $\lpn(\calP) > n/3 - 1$. Later we will
  use that $k \leq \lpn(\calP)\leq n/3$, i.e.,
  \begin{equation}\label{eq:k-leqn/3}
    k \leq \frac{n}{3}.
  \end{equation}
 
 Note that every edge of $K_n$ belongs to exactly one page of
 $\calP$. Now for each page $P \in \calP$ we consider an outerplanar
 graph $O_P$ consisting of all edges of $P$ and their incident vertices $V_P$
 together with the edges of the convex hull~$C_P$ of $V_P$.
 For each page $P\in \calP$ we color the edges of $O_P$:
 \begin{itemize}
  \item The \emph{black edges} are edges of $C_P$ belonging to $P$.
  \item The \emph{red edges} are edges of $C_P$ which do not belong to $P$.
  \item The \emph{green edges} are inner edges of $O_P$ which belong to $P$.
  \end{itemize}
Observe that every edge $e$ of $K_n$ has a color in $\{ \text{black},
\text{green}\}$ for exactly one page, while $e$ may be red for any number of
pages.

For a vertex $v$ and a page $P$ containing $v$, let the \emph{forward edge} $\fwdP(v)$ at
$v$ be the edge of $C_P$ which leaves $v$ in clockwise direction. Let $r_v$
be the number of pages for which the forward edge of $v$ is red, and $b_v$ be the
number of pages for which the forward edge of $v$ is black.  As $v$ has exactly one
forward edge on each page,~$v$ is incident to exactly $r_v + b_v$ pages in
$\calP$.  Hence, denoting $R = \sum_v r_v$ and $B = \sum_v b_v$, we have
\begin{equation}\label{eq:kn=R+B}
  k\,n = \sum_{P \in \calP} |V_P| = \sum_v (r_v+b_v) = R + B. 
\end{equation}

Now for each page $P$ and each edge $e = uv$ of $C_P$ with $u$
clockwise followed by $v$, let $\len(e)$ be the distance along 
$K_n$ when going clockwise from $u$ to $v$, i.e.,
if $u=v_i$ and $v=v_j$ and $i < j$ then
$\len(e) = j-i$ and $\len(e) = j-i+n$ if $j<i$.
Since $C_P$ is a cycle, we have $\sum_{e\in C_P} \len(e) = n$.  Thus

\begin{align*}
  |\calP|&\cdot n   = \sum_{P \in \calP} \Big(\sum_{e \in C_P} \len(e)\Big)
                    = \sum_{P \in \calP} \Big(\sum_{v \in V_P} \len(\fwdP(v))\Big)\\
                  & = \sum_v \Big(\sum_{P:v\in V_P} \len(\fwdP(v))\Big)
    \overset{(\diamond)}{\geq} \sum_v \Big( \sum_{\ell = 1}^{b_v} \ell\Big)
                    \geq \sum_v \frac{b_v^2}{2}
   \overset{(\ast)}{\geq} \frac{1}{2n} \Big(\sum_v b_v\Big)^2\\
                  & = \frac{1}{2n} B^2
   \overset{\eqref{eq:kn=R+B}}{=} \frac{1}{2n}(kn-R)^2
                    \geq \frac{1}{2n}(k^2n^2 - 2knR)
                    = \frac{k^2}{2}n - kR
   \overset{\eqref{eq:k-leqn/3}}{\geq} \frac{k^2}{2}n - \frac{n}{3}R.
\end{align*}
For $(\diamond)$ ignore red forward edges at $v$ and use that the black
forward edges at~$v$ are pairwise distinct and for $(\ast)$ use the
Cauchy-Schwarz inequality with the vectors $(b_{v_1},\ldots,b_{v_n})$ and
$(1,\ldots,1)$.

Dividing both sides of the above by $n$ we get
 \begin{equation} |\calP| \geq
\frac{k^2}{2}-\frac{R}{3}.\label{eq:num-pages-lower-bound}
 \end{equation}
 
Now consider the green edges in $K_n$.
Since $O_P$ is outerplanar and the green edges of $O_P$ are the inner edges
there are at most $|V_P|-3$ green edges on page~$P$.
Therefore, we have
\begin{align}
    \label{eq:green-edges-upper-bound}
    \#\text{green edges}
    &\leq  \sum_{P \in \calP} (|V(P)|-3)
    = kn - 3|\calP|
    \overset{\eqref{eq:num-pages-lower-bound}}{\leq} kn - \frac{3k^2}{2} + R \text{ and}\\
    \label{eq:green-edges-count}
    \#\text{green edges} 
    &= |E(K_n)| - \#\text{black edges}
    = \binom{n}{2} - B
    \overset{\eqref{eq:kn=R+B}}{=} \binom{n}{2} - kn + R.
\end{align}
Combining \eqref{eq:green-edges-upper-bound} and
\eqref{eq:green-edges-count} we conclude:
\begin{align*}
    && kn - \frac{3k^2}{2}+R 
    &\geq \binom{n}{2} - kn + R \\
    \iff&& 
    0 &\geq \frac{3k^2}{2} - 2kn + \binom{n}{2} \\
    \iff&& 
    0 &\geq k^2 -\frac{4n}{3}k + \frac{n(n-1)}{3}\\
    \implies&&
    k &\geq \frac{2n}{3} - \sqrt{\left(\frac{2n}{3}\right)^2 - \frac{n(n-1)}{3}} \\
    \implies&&
    k &\geq 
    \frac{2n}{3} - \sqrt{\frac{n^2}{9} + \frac{n}{3}} 
    > \frac{2n}{3} - \sqrt{\frac{(n+3)^2}{9}} = \frac{n}{3} - 1
\end{align*}
Thus we have $\displaystyle k > \frac{n}{3} - 1$, as desired.
\end{proof}

Note that for the lower bound to be tight there can be no red edges,
i.e., each page contains the edges of the convex hull of its vertices.
This is the case in the construction for the upper bound on the local page number for \cref{thm:page-number}, which is given next.

\begin{lemma}\label{lem:local-page-UB}
  For any $n$, we have\quad
  $\displaystyle \lpn(K_n) \leq \frac{1}{3}n + 4$.
\end{lemma}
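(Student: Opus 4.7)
The plan is to construct an explicit book embedding of $K_n$ realizing the bound, guided by the tightness analysis in the proof of \cref{lem:page-LB}. That proof shows that to match the $n/3$ lower bound, every page $P$ must contain all convex-hull edges of its vertex set (no red edges), and the counting also forces the average page size to be roughly $6$ (since under tightness $\sum_P |V_P| = n \cdot n/3$ and $|\mathcal{P}| \approx k^2/2 = n^2/18$). Combined, this strongly suggests taking pages that are \emph{triangulations of convex hexagons}, i.e., outerplanar graphs on $6$ spine-ordered vertices with exactly $2\cdot 6 - 3 = 9$ edges. Each such page contributes $6$ vertex-page incidences per $9$ covered edges, precisely matching the budget $k = n/3$.

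First I would reduce to a convenient value of $n$ by padding with a constant number of dummy vertices placed at the ends of the spine, absorbing their incident edges into a constant number of dedicated pages; this is the source of part of the additive constant. I would then assume $n$ satisfies whatever divisibility condition the subsequent design needs (for instance $18 \mid n(n-1)$ so that a balanced hexagon decomposition is numerically consistent).

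Next I would decompose $E(K_n)$ into triangulated convex hexagons so that every edge appears in exactly one. The natural template is a cyclic/algebraic construction: identify $V(K_n)$ with $\mathbb{Z}_n$, pick a base hexagon $H_0$ whose multiset of $15$ edge-differences distributes as uniformly as possible over $\mathbb{Z}_n \setminus \{0\}$, and take all $n$ translates of $H_0$ (together with a fixed triangulation of each). By balance, each edge of $K_n$ is covered the same number of times, and by counting each vertex lies in exactly $(n-1)/3$ hexagons. When a single cyclic orbit does not cleanly tile $E(K_n)$ for the given $n$, I would combine several orbit types or patch in a bounded number of additional outerplanar pages covering the residual edges; the linearization back from $\mathbb{Z}_n$ to the linear spine order is then checked to preserve non-crossing of all page edges, the per-vertex incidence count being controlled by the cyclic symmetry.

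The hardest step will be producing this hexagon decomposition explicitly for every $n$ while simultaneously ensuring (i) outerplanarity of each page in the \emph{linear} spine order rather than only the cyclic one, and (ii) the per-vertex page count is uniformly close to $(n-1)/3$ rather than merely on average so. Handling the transition from the cyclic to the linear spine, together with the dummy-vertex absorption and the residual short-/long-edge patches, is exactly what produces the specific additive constant $+4$ in the stated bound.
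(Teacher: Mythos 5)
Your plan mirrors the paper's at the level of strategy: read off from the tightness analysis of \cref{lem:page-LB} that pages should be red-edge-free triangulations on about six spine vertices, then realize the decomposition by cyclic rotations of template pages, and use the circular spine to control both non-crossing and per-vertex incidences. This is precisely the paper's approach. However, the proposal stops short of the step that actually carries the proof. A single base hexagon orbit contributes only $9n$ edges, so you need roughly $n/18$ distinct templates, and you must choose their vertex positions so that, across all $\approx n/18$ templates, the nine edge lengths collectively hit every length in $\{1,\ldots,(n-1)/2\}$ exactly once; otherwise the translates give a balanced cover of $E(K_n)$, not a partition, and the locality count breaks. The paper does this by fixing $n=18k-3$ and defining $k$ templates $O(t)$ with vertex positions depending linearly on $t$, where $O(0)$ degenerates to a pentagon on $5$ vertices and $7$ edges so that $9(k-1)+7=(n-1)/2$; it then verifies the bijection between edges and lengths by a case analysis on $\ell$. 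Your proposal asserts that such a balanced cyclic decomposition exists and that any residuals can be patched with ``a bounded number'' of extra pages, but that assertion is exactly the content one has to produce; a uniformity heuristic over $\mathbb{Z}_n$ does not by itself yield a partition or keep each vertex's page count at $n/3+O(1)$ rather than merely average-$n/3$.

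Two smaller points. First, the ``$15$ edge-differences'' is a slip: a triangulated convex hexagon has $2\cdot 6-3=9$ edges, so only $9$ differences per template, and one template must in fact drop to five vertices to make the arithmetic close. Second, your worry about the linearization from the cyclic order to the linear spine is not actually an issue: two chords of the convex polygon cross if and only if their endpoints interleave cyclically, so ``non-crossing in the circular layout'' is equivalent to ``forms a page'' for the linear order, and no separate check is needed. The padding/removal to hit the $+4$ additive constant is handled in the paper by taking the better of adding at most $17$ star pages or deleting vertices from a larger $K_{18(k+1)-3}$; adding stars alone gives only $+16$.
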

\begin{proof}
 We shall show that if $n = 18k-3$ for some positive integer $k$,
 then $\lpn(K_n) \leq n/3=6k-1$.
 For $n$ of the form $n = 18k-3+i$ with $i < 18$ we get a page assignment
 with locality $6k-1+i$ by adding the stars
 of the $i$ additional vertices on an extra page each. A second
 option is to use a page assignment of $K_{18(k+1)-3}$ and
 remove $18-i$ vertices, this yields a page assignment with
 locality $6(k+1) - 1 = 6k+5$. By taking the better of these two choices we
 achieve a locality of at most $n/3+4$, as desired.
 
 From now on we
 assume that $n = 18k-3$, i.e., $k = (n+3)/18$. We define the length of an
 edge as the shorter distance between its two endpoints along the cyclic
 ordering. The length of edge $e$ is denoted $\len(e)$.
 As $n = 18k-3$ is odd, there are exactly $(n-1)/2 = 9k-2$
 different lengths, each realized by exactly $n$ edges. For each vertex $v$
 of $K_n$ we define a set of $k$ pages, each containing~$v$, and
 together covering exactly one edge of each length $1,\ldots,9k-2$.
 These pages each contain an outerplanar graph and are denoted by $O_v(t)$, where $t=0,\ldots,k-1$.  
 The page $O_v(0)$ contains seven edges (and five vertices) while for $t>0$ page $O_v(t)$ has nine edges (and six vertices). In total this makes the needed
 $9(k-1)+7 = 9k-2 = (n-1)/2$ edge lengths.
 
 Recall that the vertices of $K_n$ are $v_1,\ldots,v_n$ in this cyclic ordering.
 Below we describe the pages corresponding to $v_1 = v_{n+1}$. For ease of notation,
 let $O(t) = O_{v_1}(t)$. For $t = 0,\ldots,k-1$, we define the vertices $r_1(t),\ldots,r_6(t)$
 of $O(t)$:
  \begin{align*}
   r_1(t) &= v_1 = v_{18k-2} & r_2(t) &= v_{2k-2t} & r_3(t) &= v_{5k+1} \\ 
   r_4(t) &= v_{8k-t} & r_5(t) &= v_{8k+t} & r_6(t) &= v_{13k+2t}
  \end{align*}

  \begin{figure} 
    \centering 
    \includegraphics{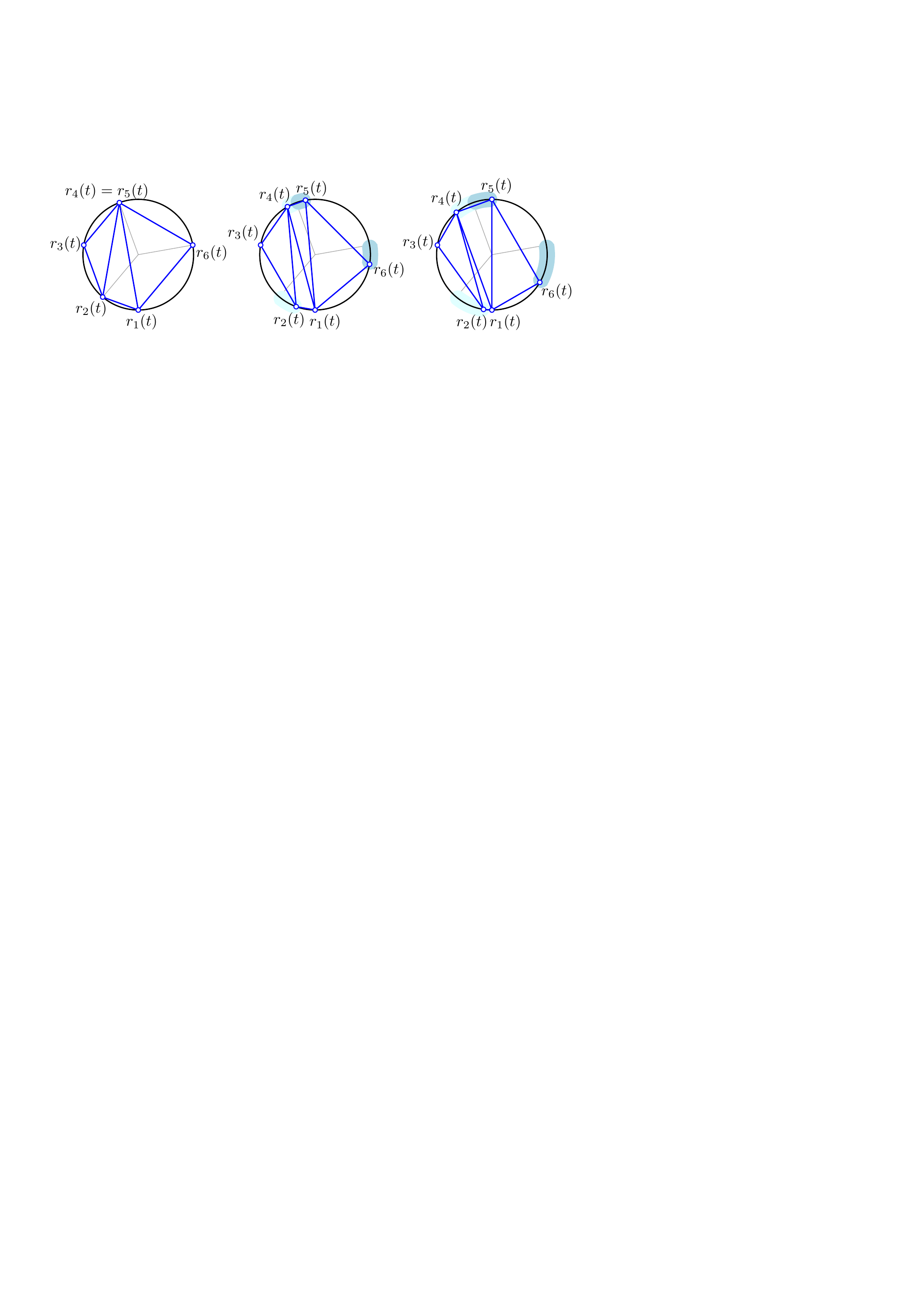}
    \caption{$O(t)$ for $t=0$ (left), $t \approx k/2$ (middle) and $t = k-1$ (right)}
  \label{fig:local-page-UB}
 \end{figure}
 We refer to \cref{fig:local-page-UB} for an illustration.  Note that for 
 $t=0$ we have $r_4(t) = r_5(t)$ and for all $t \leq k$ the vertices $r_1(t),\ldots,r_6(t)$
 appear in the order of their indices in the  cyclic ordering of $K_n$.  The edges of
 $O(t)$ are the cycle edges $e_{12}(t)=r_1(t)r_2(t)$, $e_{23}(t)=r_2(t)r_3(t)$,
 $e_{34}(t)=r_3(t)r_4(t)$, $e_{45}(t)=r_4(t)r_5(t)$, $e_{56}(t)=r_5(t)r_6(t)$, $e_{61}(t)=r_6(t)r_1(t)$,
 except for $e_{45}(0)$ which would be a loop, and the inner edges 
 $e_{14}(t)=r_1(t)r_4(t)$, $e_{24}(t)=r_2(t)r_4(t)$, $e_{15}(t)=r_1(t)r_5(t)$,
 again for $t=0$ there is an exception, we disregard $e_{15}(0)$ because it equals
 $e_{14}(0)$. Note that $O(t)$ is indeed outerplanar.
 We claim that for every length $\ell$ in the interval $[1,9k-2]$ there is an
 edge of length $\ell$ in some $O(t)$.
 
 \begingroup
 \allowdisplaybreaks
 \begin{align*}
  \ell \text{ odd, }&\ell\in [1,2k-1] &:&\quad  \len(e_{12}(t)) = (2k-2t) - 1 = 2k-2t-1 \\
  \ell \text{ even, }&\ell\in [1,2k-1] &:&\quad  \len(e_{45}(t)) = (8k+t) - (8k-t) = 2t\\
                    &\ell\in [2k,3k-1]&:&\quad  \len(e_{34}(t)) = (8k-t) - (5k+1) = 3k-t-1\\
  \ell \text{ odd, }&\ell\in [3k,5k-1]&:&\quad  \len(e_{23}(t)) = (5k+1) - (2k-2t) = 3k+2t+1\\
  \ell \text{ even, }&\ell\in [3k,5k-1]&:&\quad  \len(e_{61}(t)) = (18k-2) - (13k+2t) = 5k-2t-2\\
                    &\ell\in [5k,6k-1]&:&\quad  \len(e_{56}(t)) =  (13k+2t) - (8k+t) = 5k+t\\
                    &\ell\in [6k,7k-1]&:&\quad  \len(e_{24}(t)) =   (8k-t) - (2k-2t) = 6k+t\\
                    &\ell\in [7k,8k-1]&:&\quad  \len(e_{14}(t)) = (8k-t) - 1 = 8k-t-1\\
                    &\ell\in [8k,9k-2]&:&\quad  \len(e_{15}(t)) = (8k+t) - 1 = 8k+t-1
 \end{align*}%
 \endgroup%
For a vertex $v_i$ and some $t$ we obtain the page $O_{v_i}(t)$ from $O(t)$ by
a rotation which maps $v_1$ to $v_i$. Hence, for each $v_i$ we get a
collection $\calP_i = \{O_{v_i}(t) \mid t = 0,\ldots,k-1\}$ of pages. We claim
that $\calP = \bigcup_{v} \calP_v$ covers all the edges of~$K_n$. Consider an
arbitrary edge $v_av_b$, we assume that the arc from $v_a$ to $v_b$ is the
shorter arc, i.e., the length of the arc is $\len(v_av_b)=\ell$. From the
analysis above we know that there is a unique $t$ and a unique edge
$e_{ij}(t)\in O(t)$ with $\len(e_{ij}(t))=\ell$. There is a rotation which maps
$r_i(t)$ to $v_a$ and consequently also~$r_j(t)$ to $v_b$.  If this rotation
maps $v_1=r_1(t)$ to $v_c$, then $O_{v_c}(t)$ contains the edge $v_av_b$.
Hence, $\calP$ is a covering of the edges of $K_n$ with outerplanar graphs.

In $\calP_1$ there are $6(k-1)+5 = 6k-1$ vertex-page incidences, hence
the total number of vertex-page incidences in $\calP$ is
$n(6k-1) = n^2/3$.  Due to symmetry, each of
the $n$ vertices is incident to exactly $n/3$ pages. This proves that $\lpn(K_n)
\leq n/3$ whenever $n$ is of the form $n=18k-3$ for some positive integer~$k$.
\end{proof}

As the union page number is lower-bounded by the local page number and upper-bounded by the (global) page number, we immediately have $ n/3 - \mathcal{O}(1) = \lpn(K_n) \leq \upn(K_n) \leq \pn(K_n) = \lceil n/2 \rceil $.
We improve on the upper bound by constructing a book embedding consisting of $ 4/9 + \mathcal{O}(1) $ union pages, which concludes the proof of \cref{thm:page-number}.

\begin{lemma}\label{lem:union-page-UB}
 The union page number of $ K_n $ satisfies
 \[ \upn(K_n) \leq \frac{4}{9} n + 18. \] 
\end{lemma}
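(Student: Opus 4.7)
The plan is to give an explicit construction witnessing $\upn(K_n) \leq \frac{4}{9}n + 18$, following the template of Lemma~\ref{lem:local-page-UB}. I would first handle a convenient residue class of $n$ (say $n = 18k - 3$, as in the local page construction) and then patch the general case by attaching $\mathcal{O}(1)$ extra vertices via extra star union pages, which absorbs the additive constant.

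The natural candidate is to recycle the archetype outerplanar pages $O_{v_i}(t)$ of Lemma~\ref{lem:local-page-UB} and group these pages (together with their rotations) into vertex-disjoint collections, exploiting the fact that a union page only needs its constituent pages to be mutually vertex-disjoint, so edges from different constituents may globally ``cross''. For each fixed level $t$, the $n$ rotations $\{O_{v_i}(t)\}_{i=1}^{n}$ form a cyclic family on $V(K_n) \cong \mathbb{Z}_n$, and two rotations $O_{v_i}(t)$ and $O_{v_j}(t)$ share a vertex iff $j - i$ lies in a fixed constant-size ``forbidden offset'' set $D(t) \subseteq \mathbb{Z}_n$. A single union page then corresponds to a choice of several pairwise vertex-disjoint rotations, possibly drawn from different levels $t$; minimizing the number of union pages becomes a coloring problem on the resulting conflict graph, in spirit similar to the chain-coloring argument in the proof of Lemma~\ref{lem:uqn-UB-triangle}.

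Concretely, the key steps would be: first, possibly slightly modify the archetypes $O(t)$ so that each edge length in $\{1,\ldots,\lfloor n/2\rfloor\}$ is still realized by exactly one offset across all archetypes; second, for each $t$, identify an arithmetic progression of rotation offsets whose successive shifts are pairwise vertex-disjoint, and assemble these into candidate union pages; third, pack rotations from different levels $t$ into the same union page whenever vertex-disjointness permits; and fourth, verify that every edge of $K_n$ is covered exactly once and bound the total count by $\tfrac{4}{9}n + \mathcal{O}(1)$.

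The main obstacle will be step three. Each archetype $O(t)$ spans an arc of length about $\tfrac{13}{18}n$, so its rotations conflict for a wide range of offsets, and combining rotates across different levels must navigate many forbidden-offset sets simultaneously. The coefficient $\tfrac{4}{9}$ would emerge from the arithmetic balance between the sizes of the sets $D(t)$ and the number of rotations that fit into one union page; I would expect a Cayley-graph-style greedy coloring (ordering rotations by ``topmost level'' as in the interval-coloring argument at the end of Lemma~\ref{lem:uqn-UB-triangle}) to be the cleanest route. The explicit gap to the $\tfrac{1}{3}$ lower bound noted by the authors is consistent with this rotation-packing approach leaving a small residual slack that they could not close.
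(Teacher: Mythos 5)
Your high-level intuition --- rotate an archetype outerplanar page and pack rotations into vertex-disjoint union pages --- is indeed the paper's strategy, but the concrete plan of \emph{recycling} the pages $O_{v_i}(t)$ of Lemma~\ref{lem:local-page-UB} and then solving a packing/coloring problem does not go through, and it is not what the paper does. Two obstacles. First, for a fixed $i$, all $k$ pages $O_{v_i}(0),\dots,O_{v_i}(k-1)$ contain the vertex $v_i$ (indeed $r_1(t)=v_1$ and $r_3(t)=v_{5k+1}$ are independent of $t$), so no two of them can live in the same union page; the conflict graph is enormous and has cliques of size $k$ by design, which already precludes the clean arithmetic-progression packing you envision. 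Second, the coefficient $4/9$ is not an emergent quantity of a coloring argument: it is the sum $1/3 + 1/9$ of two explicitly engineered contributions, and that split is the whole point.

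What the paper actually does: it designs a \emph{new} archetype $G(1,t)$ on only five vertices $r_1(t),\dots,r_5(t)$, with a companion single-edge graph $H(1,t)$, carefully positioned so that as $t$ ranges over $0,\dots,k-1$ each $r_j(t)$ sweeps an arc of length roughly $k$ or $2k$, and the three rotations $G(i,t), G(i+n/3,t), G(i+2n/3,t)$ fill pairwise disjoint regions of $[n]$. This is not achieved by searching a conflict graph; it is hard-coded into the index formulas so that the sweep arcs of the $15$ vertices (plus the $H$-vertices) tile almost all of the circle. The result is $n/3$ union pages $\calP_1,\dots,\calP_{n/3}$ covering edge lengths in $[k,3k]\cup[4k+1,9k]$. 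The remaining lengths $[1,k-1]\cup[3k+1,4k-1]$ are then covered cheaply: those edges form stars, and stars placed at positions spaced $k$ apart are vertex-disjoint, giving another $2k = n/9$ union pages. Two more union pages of perfect matchings catch the stray lengths $4k$ and $5k+1$, for a total of $n/3 + n/9 + 4 = 4n/9 + 4$ when $n = 18k$ with $3\mid k$. So the place your proposal would stall --- step three, combining rotations across levels under overlapping forbidden offsets --- is precisely where the paper sidesteps the difficulty by switching to a purpose-built archetype rather than trying to pack the local-page archetype.
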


\begin{proof}
    For any $ k > 0 $ that is divisible by $ 3 $ and $ n = 18k $, we prove that $ \upn(K_n) \leq 4n/9 + 4 $.
    For any other $ n $, we add stars or use the book embedding for $ K_{n'} $, where $ n' $ is the smallest integer with $ n' \geq n $ and $ n' = 54m $ for some integer $ m $.
    The better of these options gives $ \upn(K_n) \leq 4n/9 + 18 $.
    
    The vertices of $ K_n $ are denoted by $ v_0, \dots, v_{n - 1} $ and lie on the circularly closed spine in this ordering.
    All indices are taken modulo $ n $.
    Let the \emph{length} $ \len(vw) $ of an edge $ vw $ denote the shorter distance between $ v $ and $ w $ along the cyclic vertex ordering.
    We have $ n/2 = 9k $ different lengths, where length $ 9k $ is realized by $ n/2 $ edges and all other lengths by $ n $ edges.
    We first define $ n/3 $ union pages $ \calP_1, \dots, \calP_{n/3} $ that cover $ 7/9 $ of all possible lengths and then cover the remaining edges with unions of stars.
    The union pages $ \calP_1, \dots, \calP_{n/3} $ consist of graphs $ G(i, t) $ and $ H(i, t) $ defined below.
    We refer to \cref{fig:union-page-UB} for an illustration.
    For $ t = 0, \dots, k - 1 $, we define vertices
    \begin{align*}
        r_1(t) &= v_{1 + t} &
        r_2(t) &= v_{8k + 1 - t} & 
        r_3(t) &= v_{9k + 1 + 2t} &
        r_4(t) &= v_{12k - t} \\
        r_5(t) &= v_{17k - 2t} &
        s_1(t) &= v_{3k} & 
        s_2(t) &= v_{8k + 1 + t}.
    \end{align*}
    For each $ t = 0, \dots, k - 1 $, the graph $ G(1, t) $ is then defined to consist of the vertices $ r_1(t), \dots, r_5(t) $ and the edges 
    $e_{12}(t)=r_1(t)r_2(t)$, 
    $e_{13}(t)=r_1(t)r_3(t)$,
    $e_{14}(t)=r_1(t)r_4(t)$,
    $e_{15}(t)=r_1(t)r_5(t)$,
    $e_{23}(t)=r_2(t)r_3(t)$,
    $e_{34}(t)=r_3(t)r_4(t)$, and
    $e_{45}(t)=r_4(t)r_5(t)$.
    The graph $ H(1, t) $ consists of a single edge $ s_1(t)s_2(t) $ for $ t = 0, \dots, k - 1 $.
    For $ i = 1, \dots, n $, we obtain $ G(i, t) $ by rotating $ G(1, t) $, i.e., we use the vertices $ r'_h = v_j + i - 1 $ instead of $ r_h = v_j $ for $ h = 1, \dots, 5 $.
    Similarly, $ H(i, t) $ is obtained from $ H(1, t) $ by rotation.
    
    \begin{figure}
        \centering\includegraphics[page = 3]{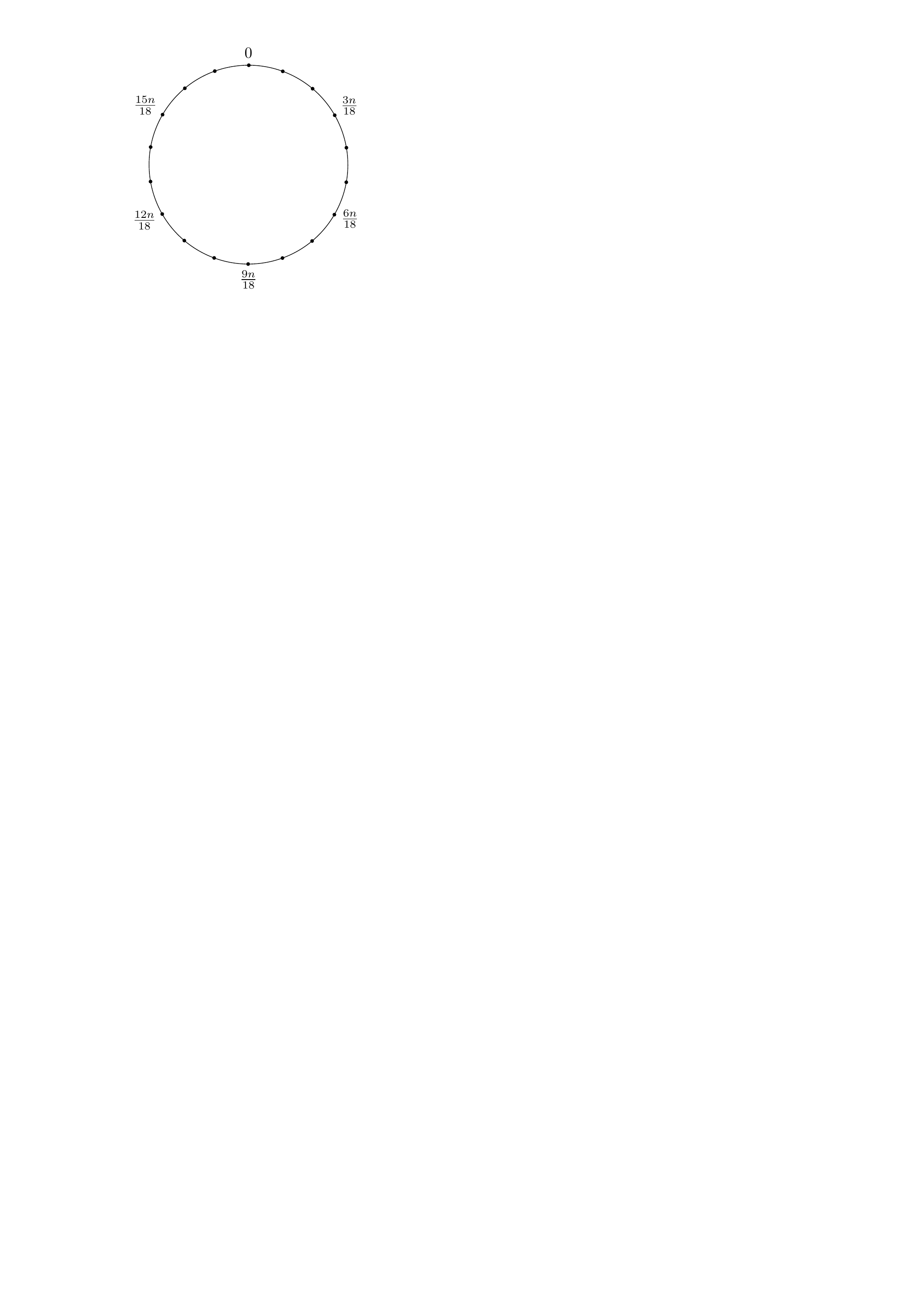}
        \caption{The graphs $ G(1, 0) $ (orange) and $ H(1, 0) $ (green). The orange, respectively green, arcs indicate the movement of the vertices for $ t = 0, \dots, k - 1 $.}
        \label{fig:union-page-UB}
    \end{figure}
    
    We claim that for $ i = 1, \dots, n $ and $ t = 0, \dots, k - 1 $, the graphs $ G(i, t) $ and $ H(i, t) $ cover all edges of length $ \ell $ with $ \ell = k, \dots, 3k $ and $ \ell = 4k + 1, \dots, 9k $.
    We only consider the graphs for $ i = 1 $ and observe that each length is covered at least once.
    By symmetry, all edges of the claimed lengths are covered.
    Recall that $ k $ is divisible by $ 3 $.
    An edge of length $ \ell $ can be found in $ G(1, t) $ or $ H(1, t) $ as follows:
    \begin{equation*}
        \begin{array}{rlclcl}
            \ell \equiv 0 \mod 3, & \ell \in [k, 3k] & : & \len(e_{23}(t)) &=& k + 3t \\
            \ell \equiv 1 \mod 3, & \ell \in [k, 3k] & : & \len(e_{15}(t)) &=& k + 1 + 3t \\
            \ell \equiv 2 \mod 3, & \ell \in [k, 3k] & : & \len(e_{34}(t)) &=& 3k - 1 - 3t \\
            & \ell \in [4k + 1, 5k] & : & \len(e_{45}(t)) &=& 5k - t \\
            & \ell \in [5k + 1, 6k] & : & \len(s_1(t)s_2(t)) &=& 5k + 1 + t \\
            \ell \text{ even}, & \ell \in [6k + 1, 8k] & : & \len(e_{12}(t)) &=& 8k - 2t \\ 
            \ell \text{ odd}, & \ell \in [6k + 1, 8k] & : & \len(e_{14}(t)) &=& 6k + 1 + 2t \\
            & \ell \in [8k + 1, 9k] & : & \len(e_{13}(t)) &=& 9k - t
        \end{array}
    \end{equation*}
    
    We now use the given graphs to define the union pages $ \calP_1, \dots, \calP_{n/3} $.
    For $ i = 1, \dots, n/3 $, we define $ \calP_i $ as the union of $ G(i, t) $, $ G(i + n/3, t) $, and $ G(i + 2n/3, t) $ for $ t = 0, \dots, k - 1 $ and $ H(i, t)$, $ H(i + n/3, t) $, and $ H(i + 2n/3, t) $ for $ t = 1, \dots, k - 1 $. 
    To observe that $ \calP_1, \dots, \calP_{n/3} $ are indeed union pages, we list for each vertex $ v_1, \dots, v_n = v_0 $ by which vertices of $ \calP_1 $ it is hit.
    For ease of presentation, we assume that $ k $ is even.
    For odd $ k $, swap \emph{odd} and \emph{even} in the listing below.
    \begin{equation*}
        \begin{array}{lllcll}
            v_1, \dots, v_{k}, & v_{6k + 1}, \dots, v_{7k}, & v_{12k + 1}, \dots, v_{13k} & : & r_1(t) \\
            v_{k + 2}, \dots, v_{2k + 1}, & v_{7k + 2}, \dots, v_{8k + 1}, & v_{13k + 2}, \dots, v_{14k + 1} & : & r_2(t)\\
            v_{2k + 2}, \dots, v_{3k}, & v_{8k + 2}, \dots, v_{9k}, & v_{14k + 2}, \dots, v_{15k} & : & s_2(t) & t \geq 1 \\
            v_{3k}, & v_{9k}, & v_{15k} & : & s_1(t) & t \geq 1 \\
            v_{3k + 1}, \dots, v_{5k}, & v_{9k + 1}, \dots, v_{11k}, & v_{15k + 1}, \dots, v_{17k} & : & r_3(t) & \text{odd indices} \\
            v_{3k + 1}, \dots, v_{5k}, & v_{9k + 1}, \dots, v_{11k}, & v_{15k + 1}, \dots, v_{17k} & : & r_5(t) & \text{even indices} \\
            v_{5k + 1}, \dots, v_{6k}, & v_{11k + 1}, \dots, v_{12k}, & v_{17k + 1}, \dots, v_{18k} & : & r_4(t) \\
        \end{array}
    \end{equation*}
    The vertices $ v_{3k} $, $ v_{9k} $, and $ v_{15k} $ are hit by $ s_1(t) $ for all $ t $ and by $ s_2(k - 1) $, whereas all other vertices are hit for at most one $ t $.
    In particular, each vertex that is contained in some $ G(j, t) $ is not contained in any other component of $ \calP_1 $.
    In contrast, the union of the graphs $ H(j, t) $, $ j = i, i + n/3, i + 2n/3 $ and $ t = 1, \dots, k - 1 $, forms a single connected component whose edges do not cross.
    Hence, each connected component of $ \calP_1 $ is crossing-free and by symmetry $ \calP_1, \dots, \calP_{n/3} $ are union pages.
    None of the graphs $ H(i, 0) $, for $ i = 1, \dots, n $, is contained in the pages $ \calP_1, \dots, \calP_{n/3} $, i.e., the edges of length $ 5k + 1 $ are left to cover.
    We cover these edges with two additional union pages, each containing a perfect matching.
    
    Finally, we define union pages consisting of disjoint unions of stars to cover the remaining edge lengths $ 1, \dots, k - 1 $ and $ 3k + 1, \dots, 4k $.
    For this, we define stars $ S_i $ consisting of the edges $ v_{i} v_{i + 1}, \dots, v_{i} v_{i + k - 1} $ and $ T_i $ consisting of the edges $ v_{i} v_{i + 3k + 1}, \dots, v_{i} v_{i + 4k - 1} $ for $ i = 1, \dots, n $.
    For $ i = 1, \dots, k $, the union page $ \mathcal{S}_i $, respectively $ \mathcal{T}_i $, is defined as the union of $ S_{i + jk} $, respectively $ T_{i + jk} $, where $ j = 0, \dots, 17 $.
    As each union page is the disjoint union of stars, each connected component is crossing-free.
    The union pages $ \mathcal{S}_1, \dots, \mathcal{S}_k $ and $ \mathcal{T}_1, \dots, \mathcal{T}_k $ cover all remaining edge lengths except for the length $ 4k $.
    Again, we use two additional union pages containing a perfect matching each.
    Summing up, we have $ n/3 + 2 + 2k + 2 = 4n/9 + 4 $ union pages.
\end{proof}

Comparing the presented construction with the lower bound of \cref{lem:page-LB}, we remark that we have $ n^2/18 + \Theta(n) $ connected components and $ \Theta(n^2) $ red edges due to the stars.
To obtain an upper bound of $ n/3 $, however, we need exactly $ n^2/18 $ connected components that are partitioned into $ n/3 $ union pages.
In this case, each union page uses all vertices and each connected component is a maximal outerplanar graph, i.e., there are no red edges.
It remains open whether such a book embedding exists.
Note that the book embedding constructed for \cref{lem:local-page-UB} consists of $ n^2/18 $ pages and has no red edges.
Partitioning these pages into union pages containing $ n/6 $ outerplanar graphs each thus would suffice to prove an upper bound of $ n/3 + \mathcal{O}(1) $ for the union page number of $ K_n $.

\section{Conclusions}

We have shown bounds on the local page number, the local queue number, and the union queue number of complete graphs that are tight up to a constant additive term.
However, there remains a gap between the lower bound of $ n/3 - \mathcal{O}(1) $ and the upper bound of $ 4n/9 + \mathcal{O}(1) $ on the union page number of $ K_n $.
\begin{question}
    What is the union page number of complete graphs?
\end{question}

Comparing queues and stacks, we find that both in the local and in the union setting, queues are more powerful than stacks for representing complete graphs as both the local and the union queue number is smaller than the respective variant of the page number.

Finally, we point out complete bipartite graphs as another dense graph class.
Heath and Rosenberg~\cite{hr-92} proved $ \qn(K_{m, n}) = \lceil m / 2 \rceil $, where $ m \leq n $.
For the page number, it is known that
$\pn(K_{m, n}) = m $ if $ n \geq m^2 - m + 1$~\cite{bk-79},
$\pn(K_{n, n}) \leq \lfloor 2n / 3 \rfloor + 1$,
$\pn(K_{\lfloor n^2/4 \rfloor, n}) \leq n - 1$~\cite{eno-97}, and in general
$\pn(K_{m, n}) \leq \lceil (m + 2n) / 4 \rceil$~\cite{mww-88}.
In light of the unclear situation for the page number, we ask for the local and union variants of queue number and page number of complete bipartite graphs.

\section*{Acknowledgments}
The first, third and fourth author would like to thank the organizers and all participants of the Seventh Annual Workshop on Geometry and Graphs in Barbados, where part of this research was carried out.

\bibliographystyle{splncs04}
\bibliography{references-gd}

\begin{thebibliography}{10}
\providecommand{\url}[1]{\texttt{#1}}
\providecommand{\urlprefix}{URL }
\providecommand{\doi}[1]{https://doi.org/#1}

\bibitem{aa-89}
Algor, I., Alon, N.: The star arboricity of graphs. Discrete Mathematics
  \textbf{75}(1),  11--22 (1989). \doi{10.1016/0012-365X(89)90073-3}

\bibitem{bk-79}
Bernhart, F., Kainen, P.C.: The book thickness of a graph. Journal of
  Combinatorial Theory, Series B  \textbf{27}(3),  320--331 (1979).
  \doi{10.1016/0095-8956(79)90021-2}

\bibitem{bsu-18}
Bläsius, T., Stumpf, P., Ueckerdt, T.: Local and union boxicity. Discrete
  Mathematics  \textbf{341}(5),  1307--1315 (2018).
  \doi{10.1016/j.disc.2018.02.003}

\bibitem{dfgklnu-21}
Damásdi, G., Felsner, S., Gir{\~a}o, A., Keszegh, B., Lewis, D., Nagy, D.T.,
  Ueckerdt, T.: On covering numbers, young diagrams, and the local dimension of
  posets. SIAM Journal on Discrete Mathematics  \textbf{35}(2),  915--927
  (2021). \doi{10.1137/20M1313684}

\bibitem{dehmw-20}
Dujmović, V., Eppstein, D., Hickingbotham, R., Morin, P., Wood, D.R.:
  Stack-number is not bounded by queue-number (2020),
  \url{https://arxiv.org/abs/2011.04195}

\bibitem{eno-97}
Enomoto, H., Nakamigawa, T., Ota, K.: On the pagenumber of complete bipartite
  graphs. Journal of Combinatorial Theory, Series B  \textbf{71}(1),  111--120
  (1997). \doi{10.1006/jctb.1997.1773}

\bibitem{el-20}
Esperet, L., Lichev, L.: Local boxicity (2020),
  \url{https://arxiv.org/abs/2012.04569}

\bibitem{hlr-92}
Heath, L.S., Leighton, F., Rosenberg, A.: Comparing queues and stacks as
  machines for laying out graphs. SIAM Journal on Discrete Mathematics
  \textbf{5}(3),  398--412 (1992). \doi{10.1137/0405031}

\bibitem{hr-92}
Heath, L.S., Rosenberg, A.: Laying out graphs using queues. SIAM Journal on
  Computing  \textbf{21}(5),  927--958 (1992). \doi{10.1137/0221055}

\bibitem{kmmssuw-20}
Kim, J., Martin, R.R., Masařík, T., Shull, W., Smith, H.C., Uzzell, A., Wang,
  Z.: On difference graphs and the local dimension of posets. European Journal
  of Combinatorics  \textbf{86},  103074 (2020).
  \doi{10.1016/j.ejc.2019.103074}

\bibitem{ku-16}
Knauer, K., Ueckerdt, T.: Three ways to cover a graph. Discrete Mathematics
  \textbf{339}(2),  745--758 (2016). \doi{10.1016/j.disc.2015.10.023}

\bibitem{mm-21}
Majumder, A., Mathew, R.: Local boxicity and maximum degree (2021),
  \url{https://arxiv.org/abs/1810.02963}

\bibitem{mu-19}
Merker, L., Ueckerdt, T.: Local and union page numbers. In: Archambault, D.,
  T{\'o}th, C.D. (eds.) Graph Drawing and Network Visualization. pp. 447--459.
  Springer International Publishing, Cham (2019).
  \doi{10.1007/978-3-030-35802-0\_34}

\bibitem{mu-20}
Merker, L., Ueckerdt, T.: The local queue number of graphs with bounded
  treewidth. In: Auber, D., Valtr, P. (eds.) Graph Drawing and Network
  Visualization. pp. 26--39. Springer International Publishing, Cham (2020).
  \doi{10.1007/978-3-030-68766-3\_3}

\bibitem{mww-88}
Muder, D.J., Weaver, M.L., West, D.B.: Pagenumber of complete bipartite graphs.
  Journal of Graph Theory  \textbf{12}(4),  469--489 (1988).
  \doi{10.1002/jgt.3190120403}

\bibitem{n-64}
Nash-Williams, C.S.: Decomposition of finite graphs into forests. Journal of
  the London Mathematical Society  \textbf{s1-39}(1),  12--12 (1964).
  \doi{10.1112/jlms/s1-39.1.12}

\end{thebibliography}

\end{document}